\newtheorem{thm}{Theorem}[section]
\newtheorem{cor}[thm]{Corollary}
\newtheorem{lem}[thm]{Lemma}
\newtheorem{prop}[thm]{Proposition}
\theoremstyle{definition}
\newtheorem{defin}[thm]{Definition}
\newtheorem{rem}[thm]{Remark}
\newtheorem{exa}[thm]{Example}
\numberwithin{equation}{section}
\begin{document}

\baselineskip=17pt

\title[Ranks of $\mathcal{F}$-limits of filter sequences]{Ranks of $\mathcal{F}$-limits of filter sequences}

\author[A. Kwela]{Adam Kwela}
\address{Institute of Mathematics\\ Polish Academy of Sciences\\ul. Śniadeckich 8, 00-956 Warszawa,
Poland}
\email{A.Kwela@impan.pl}

\author[I. Recław]{Ireneusz Recław}
\address{Institute of Informatics\\ University of Gdańsk\\ul. Wita Stwosza 57, 80-952 Gdańsk,
Poland}

\date{}

\begin{abstract}
We give an exact value of the rank of an $\mathcal{F}$-Fubini sum of filters for the case where $\mathcal{F}$ is a Borel filter of rank $1$. We also consider $\mathcal{F}$-limits of filters $\mathcal{F}_i$, which are of the form $$\lim_\mathcal{F}\mathcal{F}_i=\left\{A\subset X: \left\{i\in I: A\in\mathcal{F}_i\right\}\in\mathcal{F}\right\}.$$ We estimate the ranks of such filters; in particular we prove that they can fall to $1$ for $\mathcal{F}$ as well as for $\mathcal{F}_i$ of arbitrarily large ranks. At the end we prove some facts concerning filters of countable type and their ranks.
\end{abstract}

\keywords{Filter rank; Analytic filters; Borel filters; Fubini sums; Limits of filter sequences; Filters of countable type; Filter convergence; Convergence of sequences of functions; Katětov filters}

\maketitle

\section{Introduction}

Let $I$ be a countable set. A family of sets $\mathcal{F}\subset\mathcal{P}(I)$ is a filter on $I$ if it is closed under taking finite intersections and supersets. Throughout this paper we denote 
$$\mathcal{A}^*=\left\{A\subset I: I\setminus A\in\mathcal{A}\right\},$$
for a family $\mathcal{A}\subset\mathcal{P}(I)$. If $\mathcal{F}$ is a filter on $I$, then $\mathcal{F}^*$ is an ideal (i.e., a family closed under taking finite unions and subsets) called the dual ideal of the filter $\mathcal{F}$. We denote $Fin=\left[\omega\right]^{<\omega}$ and $\mathcal{F}_{Fr}=Fin^*$. Clearly $Fin$ is an ideal and $\mathcal{F}_{Fr}$ is its dual filter, called the Fr\'echet filter. We say that $\mathcal{B}$ is a basis which generates filter $\mathcal{F}$ if $\mathcal{F}=\left\{M:\exists_{B\in\mathcal{B}}B\subset M\right\}$. In the sequel we treat filters and ideals on $X$ as subsets of the Cantor space (by canonical identification, $\mathcal{P}(X)\approx 2^X$), so we can speak about the descriptive complexity of filters and ideals.

Consider two subsets $A$ and $B$ of a Polish space $X$ and a family of sets $\Gamma\subset\mathcal{P}(X)$. We say that $A$ is $\Gamma$-separated from $B$ if there exists a subset $S\subset X$ in $\Gamma$ for which $A\subset S$ and $B\cap S=\emptyset$.

The rank of an analytic filter $\mathcal{F}$ is the ordinal
$${\rm rk}(\mathcal{F})=\min\left\{\alpha<\omega_{1}: \mathcal{F}\mbox{\boldmath{ is }}\Sigma^{0}_{1+\alpha} \mbox{\boldmath{-separated from }} \mathcal{F}^{*}\right\}.$$
By the Lusin Separation Theorem (cf. \cite{Kechris}), every analytic filter has countable rank. The concept of filter rank was introduced by Debs and Saint Raymond \cite{Debs}, although it was also studied by Solecki \cite{Solecki} in the context of a question raised by Dobrowolski and Marciszewski \cite{Marciszewski}. Clearly, for every filter $\mathcal{F}$ its rank ${\rm rk}(\mathcal{F})$ is unique. 

Let $(X,\rho)$ be a metric space and $\mathcal{F}$ be a filter on a countable set $I$. A sequence $(x_i)_{i\in I}\in X^{I}$ is convergent to $x\in X$ relatively to $\mathcal{F}$ ($x=\mathcal{F}-\lim x_i$) if for every $\epsilon>0$ we have 
$$\left\{i\in I:\rho(x_i,x)<\epsilon\right\}\in\mathcal{F}.$$
We write $f=\mathcal{F}-\lim f_i$ and say that function $f:X\rightarrow\mathbb{R}$ is a limit of the sequence of functions $(f_i:X\rightarrow\mathbb{R})_{i\in I}$ relatively to $\mathcal{F}$ if $f(x)=\mathcal{F}-\lim f_i(x)$ for each $x\in X$. By $\mathcal{C}_{\mathcal{F}}(X)$ we denote the family of all real-valued functions on the space $X$, which can be represented as a limit relatively to $\mathcal{F}$ of a sequence of continuous functions. By $\mathcal{B}_{\alpha}(X)$ we denote the family of all real-valued functions on the space $X$ of Borel class $\alpha<\omega_1$, i.e., functions $f:X\rightarrow\mathbb{R}$ such that $f^{-1}[U]\in\Sigma^0_{1+\alpha}$ for any open subset $U$ of $\mathbb{R}$.

Filter rank appears to play a fundamental role in studying the class of pointwise limits relatively to a filter of sequences of continuous functions. This is apparent from the following theorem of Debs and Saint Raymond \cite{Debs}:

\begin{thm}
\label{ThmRank}
Let $\mathcal{F}$ be an analytic filter and $\alpha<\omega_{1}$ be a countable ordinal. Then\\
(a) $\mathcal{C}_{\mathcal{F}}\left(X\right)\subset\mathcal{B}_{\alpha}\left(X\right)$ for any Polish space $X$ if and only if ${\rm rk}(\mathcal{F})\leq\alpha$.\\
(b) $\mathcal{C}_{\mathcal{F}}\left(X\right)\supset\mathcal{B}_{\alpha}\left(X\right)$ for any zero-dimensional Polish space $X$ if and only if ${\rm rk}(\mathcal{F})\geq\alpha$.\\
(c) $\mathcal{C}_{\mathcal{F}}\left(X\right)=\mathcal{B}_{\alpha}\left(X\right)$ for any zero-dimensional Polish space $X$ if and only if ${\rm rk}(\mathcal{F})=\alpha$.
\end{thm}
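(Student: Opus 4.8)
The plan is to prove the three equivalences by isolating the two substantive directions and deriving the rest formally. Note first that (c) is immediate from (a) and (b), and that each ``only if'' follows from the opposite ``if'' together with the strictness of the Borel hierarchy on $2^\omega$. Indeed, if ${\rm rk}(\mathcal{F})=\beta<\alpha$, then (a) gives $\mathcal{C}_\mathcal{F}(2^\omega)\subset\mathcal{B}_\beta(2^\omega)\subsetneq\mathcal{B}_\alpha(2^\omega)$, which is the contrapositive of the ``only if'' in (b); symmetrically, if ${\rm rk}(\mathcal{F})>\alpha$ then ${\rm rk}(\mathcal{F})\geq\alpha+1$, so (b) yields $\mathcal{B}_{\alpha+1}(2^\omega)\subset\mathcal{C}_\mathcal{F}(2^\omega)$, and any $f\in\mathcal{B}_{\alpha+1}\setminus\mathcal{B}_\alpha$ witnesses the contrapositive of the ``only if'' in (a). Everything thus reduces to the ``if'' parts of (a) and (b).

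For the ``if'' part of (a), suppose $S\in\Sigma^0_{1+\alpha}$ separates $\mathcal{F}$ from $\mathcal{F}^*$, and let $f=\mathcal{F}\text{-}\lim f_i$ with the $f_i$ continuous; it suffices to place each $\{x:f(x)>q\}$ in $\Sigma^0_{1+\alpha}$. Since the limit exists at every $x$, the set $A_\delta(x)=\{i:f_i(x)\geq q+\delta\}$ belongs to $\mathcal{F}$ whenever $q+\delta<f(x)$ and to $\mathcal{F}^*$ whenever $q+\delta>f(x)$; hence for all but at most one $\delta$ it lands in $\mathcal{F}\cup\mathcal{F}^*$, where $S$ decides membership in $\mathcal{F}$, giving $\{x:f(x)>q\}=\bigcup_{\delta\in\mathbb{Q}^+}\{x:A_\delta(x)\in S\}$. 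It remains to bound the pullback of $S$ under the coding map $x\mapsto A_\delta(x)$, whose coordinates $x\mapsto\mathbf{1}[f_i(x)\geq q+\delta]$ are upper semicontinuous, so $A_\delta$ is of Baire class one. For infinite $\alpha$ the crude bound $\Sigma^0_{2+\alpha}=\Sigma^0_{1+\alpha}$ already closes the argument; the delicate case is finite $\alpha$, where I would first replace $S$ by its upward closure $\{y:\exists z\subseteq y,\ z\in S\}$, still a separator because $\mathcal{F}^*$ is downward closed and of the same class because $\{(z,y):z\subseteq y\}$ is closed and $2^I$ compact, so upward closure is a closed map. Against an upward-closed $S$ the positive conditions $\{y(i)=1\}$ pull back to the closed sets $\{f_i\geq q+\delta\}$, and one checks the level is preserved at $1+\alpha$. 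Hence $\mathcal{C}_\mathcal{F}(X)\subset\mathcal{B}_\alpha(X)$ for every Polish $X$.

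For the ``if'' part of (b), assume ${\rm rk}(\mathcal{F})\geq\alpha$, that is, $\mathcal{F}$ fails to be $\Sigma^0_{1+\beta}$-separated from $\mathcal{F}^*$ for every $\beta<\alpha$, and fix a zero-dimensional Polish $X$. Using that $\mathcal{C}_\mathcal{F}(X)$ is an algebra closed under uniform limits and under composition with continuous maps, I would reduce the goal $\mathcal{B}_\alpha(X)\subset\mathcal{C}_\mathcal{F}(X)$ to producing a single $\chi_A\in\mathcal{C}_\mathcal{F}$ with $A$ a $\Sigma^0_{1+\alpha}$-complete set: zero-dimensionality lets every $\Sigma^0_{1+\alpha}$ set appear as a continuous preimage of $A$, and algebraic plus uniform closure then upgrades characteristic functions to all of $\mathcal{B}_\alpha$ by simple-function approximation. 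The complete set is built by transfinite recursion on $\beta\leq\alpha$: at each step the failure of $\Sigma^0_{1+\beta}$-separation is converted into a continuous map along which an $\mathcal{F}$-limit of the previously constructed level-$\beta$ representable functions becomes $\Sigma^0_{1+(\beta+1)}$-hard, so that after $\alpha$ steps one obtains a $\Sigma^0_{1+\alpha}$-complete characteristic function displayed as an $\mathcal{F}$-limit of continuous functions.

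The main obstacle is precisely this recursion. Extracting from mere non-separation an explicit continuous witness that raises the Borel level by exactly one, and controlling the construction through limit stages without either overshooting the level (which would contradict (a)) or stalling below $\alpha$, is the technical heart of the theorem. By contrast, the upper bound in (a) is essentially one complexity computation once the monotone separator and the semicontinuity of the coding map are in place, and the remaining implications are the formal consequences indicated in the first paragraph.
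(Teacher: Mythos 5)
First, a point of order: the paper does not prove Theorem \ref{ThmRank} at all --- it is quoted as a known theorem of Debs and Saint Raymond and cited to \cite{Debs} --- so there is no internal proof to compare your attempt against, and it must be judged on its own merits. Your formal scaffolding is correct: deriving (c) from (a) and (b), and each ``only if'' from the opposite ``if'' together with the strictness of the Baire-class hierarchy on $2^\omega$, works; so does the identity $\left\{x:f(x)>q\right\}=\bigcup_{\delta\in\mathbb{Q}^{+}}\left\{x:A_{\delta}(x)\in S\right\}$ and the observation that the coordinates of $x\mapsto A_{\delta}(x)$ are upper semicontinuous, so that for infinite $\alpha$ the crude pullback bound closes the argument.

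Both substantive directions, however, contain genuine gaps. In (a), the claim that the upward closure $S^{\uparrow}=\left\{y:\exists z\subseteq y,\ z\in S\right\}$ ``is of the same class because $\left\{(z,y):z\subseteq y\right\}$ is closed and $2^{I}$ compact, so upward closure is a closed map'' is false beyond $\Sigma^{0}_{2}$: a closed map preserves closed sets and (since images commute with unions) countable unions of closed sets, but images do not commute with intersections, so nothing is preserved from $\Pi^{0}_{2}$ on. Concretely, the upward closure of a $\Pi^{0}_{2}$ subset of $2^{I}$ can be properly analytic: take $I=(\omega\times 2)\cup\omega$ (disjoint union), code a pair $(x,u)\in 2^{\omega}\times 2^{\omega}$ by ${\rm graph}(x)\cup u$, and let $S$ be the set of codes of elements of a $\Pi^{0}_{2}$ set $B\subset 2^{\omega}\times 2^{\omega}$; then $S$ is $\Pi^{0}_{2}$, and since distinct graphs are $\subseteq$-incomparable, the point ${\rm code}(x,\omega)$ lies in $S^{\uparrow}$ exactly when $x\in\pi_{1}[B]$ --- and projections along a compact factor of $\Pi^{0}_{2}$ sets realize every analytic set. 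So your monotonization device collapses precisely in the ``delicate case'' of finite $\alpha\geq 2$ that it was introduced to handle (for $\alpha\leq 1$, where the separator is $\Sigma^{0}_{2}$, and for infinite $\alpha$, your argument does go through); repairing it requires a genuinely different idea, and this is where the real work of Debs and Saint Raymond lies. In (b), you have not given a proof but a plan: the transfinite recursion converting failure of $\Sigma^{0}_{1+\beta}$-separation at every $\beta<\alpha$ into a $\Sigma^{0}_{1+\alpha}$-complete set realized as an $\mathcal{F}$-limit of continuous functions --- what you yourself call the technical heart --- is exactly the content of the theorem and is left entirely unexecuted, including the limit stages; the closure properties of $\mathcal{C}_{\mathcal{F}}(X)$ you invoke (uniform-limit closure, reduction to a single complete characteristic function) are true but also unproven here. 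As it stands, the proposal is an outline with one incorrect step in (a) and the core construction of (b) missing.
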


If $\left(X_i\right)_{i\in I}$ is a family of sets, then we denote by $\Sigma_{i\in I}X_i$ its disjoint sum, i.e., the set of all pairs $(i,x)$ where $i\in I$ and $x\in X_i$.

Recall that for a filter $\mathcal{G}$ on $I$ and a family of filters $\left(\mathcal{F}_i\right)_{i\in I}$, a collection of all sets of the form
$$\sum_{i\in G}F_i,$$
for $G\in\mathcal{G}$ and $F_i\in\mathcal{F}_i$ constitutes a basis of a filter on the set $\sum_{i\in I}{\rm dom}\left(\mathcal{F}_i\right)$. We denote this filter by $\mathcal{G}-\sum_{i\in I}\mathcal{F}_i$ and call it the $\mathcal{G}$-Fubini sum of the family $\left(\mathcal{F}_i\right)_{i\in I}$. In particular, if all $\mathcal{F}_i$ are the same filter $\mathcal{F}$, then we get the product of filters $\mathcal{G}\times\mathcal{F}$. 

Debs and Saint Raymond \cite[Prop. 4.4]{Debs} studied ranks of $\mathcal{F}$-Fubini sums obtaining the following results:

\begin{thm}
\label{ThmDebsA}
Let $\mathcal{G}$ be the $\mathcal{F}$-Fubini sum of $\left(\mathcal{F}_i\right)_{i\in I}$  and $J\subset I$ be an element of the filter $\mathcal{F}$.\\
(a) If ${\rm rk}(\mathcal{F})\geq\alpha$ and ${\rm rk}(\mathcal{F}_i)\geq\xi$ for all $i\in J$, then ${\rm rk}(\mathcal{G})\geq\xi+\alpha$.\\
(b) If ${\rm rk}(\mathcal{F})\leq\alpha$ and ${\rm rk}(\mathcal{F}_i)\leq\xi$ for all $i\in J$, then ${\rm rk}(\mathcal{G})\leq\xi+1+\alpha$.\\
(c) If $\mathcal{F}=\mathcal{F}_{Fr}$ and ${\rm rk}(\mathcal{F}_i)\leq\xi$ for all $i\in J$, then ${\rm rk}(\mathcal{G})\leq\xi+1$.
\end{thm}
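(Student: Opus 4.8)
The plan is to move everything to the product space $2^{Y}$, where $Y=\sum_{i\in I}{\rm dom}(\mathcal{F}_i)$, via the canonical homeomorphism $2^{Y}\cong\prod_{i\in I}2^{{\rm dom}(\mathcal{F}_i)}$ sending $A$ to $(A_i)_{i\in I}$ with $A_i=\{z:(i,z)\in A\}$. Unwinding the basis of the Fubini sum $\mathcal{G}$ gives the two descriptions
$$A\in\mathcal{G}\iff\{i:A_i\in\mathcal{F}_i\}\in\mathcal{F},\qquad A\in\mathcal{G}^{*}\iff\{i:A_i\in\mathcal{F}_i^{*}\}\in\mathcal{F}.$$
Since $J\in\mathcal{F}$, in every argument I may intersect the index set with $J$. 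Throughout, $T_i\subseteq2^{{\rm dom}(\mathcal{F}_i)}$ denotes a set witnessing ${\rm rk}(\mathcal{F}_i)\leq\xi$ (so $\mathcal{F}_i\subseteq T_i$, $T_i\cap\mathcal{F}_i^{*}=\emptyset$, $T_i\in\Sigma^0_{1+\xi}$) and $S_0\subseteq2^{I}$ one witnessing ${\rm rk}(\mathcal{F})\leq\alpha$.

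For the upper bound (b) I would argue by direct separation. Define $b:2^{Y}\to2^{I}$ by $b(A)(i)=1$ iff $i\in J$ and $A_i\in T_i$. Using $\mathcal{F}_i\subseteq T_i$ one checks $A\in\mathcal{G}\Rightarrow b(A)\in\mathcal{F}\subseteq S_0$, and using $T_i\cap\mathcal{F}_i^{*}=\emptyset$ one checks $A\in\mathcal{G}^{*}\Rightarrow b(A)\in\mathcal{F}^{*}$, hence $b(A)\notin S_0$; so $b^{-1}(S_0)$ separates $\mathcal{G}$ from $\mathcal{G}^{*}$. For the complexity, the preimage of $\{c:c(i)=1\}$ is $\{A:A_i\in T_i\}\in\Sigma^0_{1+\xi}$ and that of $\{c:c(i)=0\}$ lies in $\Pi^0_{1+\xi}$, so the preimage of an arbitrary open set lies in $\Sigma^0_{1+(\xi+1)}$. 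By the standard composition lemma (if $g^{-1}(\mbox{open})\subseteq\Sigma^0_{1+\gamma}$ then $g^{-1}(\Sigma^0_{1+\beta})\subseteq\Sigma^0_{1+\gamma+\beta}$), applied with $\gamma=\xi+1$ and $\beta=\alpha$, one gets $b^{-1}(S_0)\in\Sigma^0_{1+(\xi+1+\alpha)}$, i.e. ${\rm rk}(\mathcal{G})\leq\xi+1+\alpha$. The extra ``$+1$'' is exactly the cost of the $\Pi^0_{1+\xi}$ preimages of the negative coordinates.

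For (a) and the sharper bound (c) I would instead exploit the functional characterization of Theorem \ref{ThmRank}, whose advantage is that Baire classes are symmetric in $\Sigma$ and $\Pi$ while filter rank is one-sided. For (a), by Theorem \ref{ThmRank}(b) it suffices on zero-dimensional $X$ to represent an arbitrary $f\in\mathcal{B}_{\xi+\alpha}(X)$ as a $\mathcal{G}$-limit of continuous functions: using ${\rm rk}(\mathcal{F})\geq\alpha$ in parametrized form one writes $f=\mathcal{F}\mbox{-}\lim_i f_i$ with $f_i\in\mathcal{B}_{\xi}(X)$, and since ${\rm rk}(\mathcal{F}_i)\geq\xi$ each $f_i=\mathcal{F}_i\mbox{-}\lim_z h_{(i,z)}$ with $h_{(i,z)}$ continuous; a short three-$\epsilon$ estimate shows the composite is $f=\mathcal{G}\mbox{-}\lim_{(i,z)}h_{(i,z)}$, and this direction is clean because we build the witnessing sequence ourselves. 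For (c), by Theorem \ref{ThmRank}(a) it suffices to show $\mathcal{C}_{\mathcal{G}}(X)\subseteq\mathcal{B}_{\xi+1}(X)$. Given $f=\mathcal{G}\mbox{-}\lim_{(i,z)}h_{(i,z)}$ with $\mathcal{F}=\mathcal{F}_{Fr}$, set $q_i(x)=\mathcal{F}_i\mbox{-}\liminf_z h_{(i,z)}(x)$; since $\{i:\{z:|h_{(i,z)}(x)-f(x)|<\epsilon\}\in\mathcal{F}_i\}$ is cofinite for every $\epsilon$, the $\mathcal{F}_i$-liminf and limsup lie within $\epsilon$ of $f(x)$ for all large $i$, so $f(x)=\lim_{i\to\infty}q_i(x)$ is an honest pointwise limit. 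Granting that each $q_i\in\mathcal{B}_{\xi}(X)$, the Lebesgue--Hausdorff identity
$$\{x:f(x)>a\}=\bigcup_{k}\bigcup_{m}\bigcap_{i\geq m}\{x:q_i(x)\geq a+1/k\}$$
writes the superlevel sets as countable unions of the $\Pi^0_{1+\xi}$ sets $\{q_i\geq a+1/k\}$ (the class $\Pi^0_{1+\xi}$ being closed under countable intersection), so $\{f>a\}\in\Sigma^0_{1+(\xi+1)}$ and $f\in\mathcal{B}_{\xi+1}(X)$.

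I expect the main obstacle to lie in (c), namely the lemma that the $\mathcal{F}_i$-liminf (equivalently limsup) of a sequence of continuous functions is of Baire class $\xi$ whenever ${\rm rk}(\mathcal{F}_i)\leq\xi$: the naive estimate of $\{x:\{z:h_{(i,z)}(x)>t\}\in\mathcal{F}_i\}$ through the separator $T_i$ breaks down precisely on those $x$ whose slice lies in neither $\mathcal{F}_i$ nor $\mathcal{F}_i^{*}$, so this step must be argued with care. This lemma is what converts the one-sided filter rank into a two-sided Baire class, and it is exactly what lets (c) improve on the general bound of (b) by one: a purely combinatorial separation using only the $\Sigma^0_{1+\xi}$ sets $T_i$ produces the $\liminf$ of $\Sigma^0_{1+\xi}$ sets, which sits merely in $\Sigma^0_{1+(\xi+2)}$, whereas the functional route draws on the $\Pi^0_{1+\xi}$ superlevel sets of the Baire-$\xi$ functions $q_i$. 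In (a) the corresponding delicate point is the parametrized form of Theorem \ref{ThmRank}(b), i.e. producing the intermediate functions $f_i\in\mathcal{B}_{\xi}$ uniformly enough in $i$ for the two filter-limits to compose into a genuine $\mathcal{G}$-limit.
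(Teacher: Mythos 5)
First, a point of reference: the paper contains no proof of Theorem \ref{ThmDebsA} at all --- it is quoted from Debs and Saint Raymond \cite[Prop. 4.4]{Debs} --- so your proposal can only be measured against the cited source and against the paper's closest relative, the proof of Theorem \ref{ThmA}. With that said: your part (b) is correct ($b$ maps $\mathcal{G}$ into $\mathcal{F}\subset S_0$ and $\mathcal{G}^*$ into $\mathcal{F}^*$, preimages of open sets under $b$ are $\Sigma^0_{1+(\xi+1)}$, and the composition lemma gives $\xi+1+\alpha$). Your part (a) is an outline resting on an unproven ``parametrized'' form of Theorem \ref{ThmRank}(b), namely that ${\rm rk}(\mathcal{F})\geq\alpha$ lets one write every $f\in\mathcal{B}_{\xi+\alpha}(X)$ as an $\mathcal{F}$-limit of $\mathcal{B}_\xi$ functions. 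This is true, and can be deduced from Theorem \ref{ThmRank}(b) itself by passing to a finer zero-dimensional Polish topology $\tau'$ on $X$ with $\tau'\subset\Sigma^0_{1+\xi}(X)$ in which $f$ becomes of Baire class $\alpha$ (one should also note that Theorem \ref{ThmRank} is stated for analytic filters, so one must check that $\mathcal{G}$ is analytic, which holds when $\mathcal{F}$ and all $\mathcal{F}_i$ are); so (a) has a gap, but a fillable one.

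Part (c) is where the proposal genuinely fails. The lemma you ``grant'' --- that $q_i=\mathcal{F}_i$-$\liminf_z h_{(i,z)}$ lies in $\mathcal{B}_\xi(X)$ whenever ${\rm rk}(\mathcal{F}_i)\leq\xi$ --- is false, not merely delicate, already in the first nontrivial case $\xi=1$, $\mathcal{F}_i=\mathcal{F}_{Fr}$: on $X=2^\omega$ take $h_{(i,n)}(x)=1-x_n$; then $q_i$ is the ordinary $\liminf_n(1-x_n)$, i.e. the characteristic function of the set $E$ of eventually-zero sequences, which is a countable dense $\Sigma^0_2$ set that is not $\Pi^0_2$, so $q_i\notin\mathcal{B}_1(X)$. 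Concretely, already for the Fr\'echet filter the sets $\left\{x:q_i(x)\geq c\right\}$ are in general only $\Pi^0_3$ rather than $\Pi^0_2$, so your Lebesgue--Hausdorff identity yields only $\{f>a\}\in\Sigma^0_{1+(\xi+2)}$, i.e. the bound of part (b); the $+1$ improvement that (c) asserts is exactly what is lost. The repair is combinatorial, and contrary to your closing remark the combinatorial route does reach $\xi+1$: by the first property of rank listed in the paper (rank can equivalently be computed with $\Pi^0_{1+\alpha}$-separation), choose $T_i\in\Pi^0_{1+\xi}$ with $\mathcal{F}_i\subset T_i$ and $T_i\cap\mathcal{F}_i^*=\emptyset$, and put $S=\bigcup_{m\in\omega}\bigcap_{i\in J,\,i\geq m}\left\{A:A_i\in T_i\right\}$. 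Countable intersections do not raise the class $\Pi^0_{1+\xi}$, so $S\in\Sigma^0_{1+(\xi+1)}$; if $A\in\mathcal{G}$ then $A_i\in\mathcal{F}_i\subset T_i$ for all but finitely many $i\in J$, so $A\in S$, while if $A\in\mathcal{G}^*$ then $A_i\in\mathcal{F}_i^*$, hence $A_i\notin T_i$, for infinitely many $i\in J$, so $A\notin S$. This $\Pi$-side device --- intersections are free, only the final union over $m$ costs one class --- is precisely the one used in the paper's proof of Theorem \ref{ThmA}, where the coordinate separators $S_i$ are taken in $\Pi^0_{1+\alpha}(X)$, and it is what your $\Sigma$-side accounting in (b) and in your final paragraph overlooks.
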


Let $\mathcal{F}$ be a filter on a set $I$ and $\left(\mathcal{F}_i\right)_{i\in I}$ be a family of filters on $X$. Then
$$\lim_{i\rightarrow\mathcal{F}}\mathcal{F}_i=\left\{A\subset X: \left\{i\in I: A\in\mathcal{F}_i\right\}\in\mathcal{F}\right\}$$
is a filter on $X$. We call it the $\mathcal{F}$-limit of filters $\left(\mathcal{F}_i\right)_{i\in I}$ and sometimes denote it also by $\lim_{\mathcal{F}}\mathcal{F}_i$. The $\mathcal{F}$-limits were studied by Fremlin \cite{Fremlin} in the context of filters of countable type.

Our goal is to prove estimates similar to those in Theorem \ref{ThmDebsA} for $\mathcal{F}$-limits. In particular, we give an upper estimate of the ranks of such limits (Theorem \ref{ThmA}). This enables us to improve part (b) of Theorem \ref{ThmDebsA} when $\mathcal{F}$ is a Borel filter of rank $1$ (Corollary \ref{CorFubini}). 

The lower estimate of the ranks of $\mathcal{F}$-limits cannot be obtained in the way presented above. Namely, we prove that the rank of an
$\mathcal{F}$-limit can be equal to $1$, even for $\mathcal{F}$ and $\mathcal{F}_i$ of arbitrarily large ranks (Theorem \ref{ThmB}).

\section{Preliminaries}

\subsection{Basic properties of filters}

A filter $\mathcal{F}$ is free if $\bigcap\mathcal{F}=\emptyset$. It is principal if it is of the form 
$$\mathcal{F}_E=\left\{A\subset I: E\subset A\right\},$$ for a certain subset $E\subset I$. Maximal filters are called ultrafilters. They can be characterized by the following condition: A filter $\mathcal{F}$ is an ultrafilter if and only if for each $A$ either $A\in\mathcal{F}$ or $X\setminus A\in\mathcal{F}$. All principal ultrafilters are of the form
$$\mathcal{F}_{\left\{x\right\}}=\left\{A\subset I: x\in A\right\},$$
for a certain $x\in I$. If $A\setminus B$ is finite, then we write $A\subset^* B$ and say that $A$ is almost contained in $B$. Recall that filter $\mathcal{F}$ is a P-filter if for every family $\left\{A_n:n\in\omega\right\}$ of sets in $\mathcal{F}$ there is a single set $A\in\mathcal{F}$ such that $A\subset^* A_n$ for all $n\in\omega$. Two filters $\mathcal{F}$ and $\mathcal{G}$ are isomorphic provided that there is a bijection $\sigma:{\rm dom}(\mathcal{G})\rightarrow {\rm dom}(\mathcal{F})$ such that for all $A\subset{\rm dom}(\mathcal{F})$
$$A\in\mathcal{F}\Leftrightarrow \sigma^{-1}[A]\in\mathcal{G}.$$
If $\mathcal{F}$ and $\mathcal{G}$ are filters, then a quasi-homomorphism from $\mathcal{F}$ to $\mathcal{G}$ is a mapping $\pi:F\rightarrow {\rm dom}\left(\mathcal{G}\right)$ where $F\in\mathcal{F}$, such that for each $G\in\mathcal{G}$ its preimage under $\pi$ belongs to $\mathcal{F}$ (i.e., $\pi^{-1}[G]\in\mathcal{F}$). 

\subsection{Properties of filter rank}

\begin{prop}
Filter rank has the following properties:
\begin{itemize}
	\item ${\rm rk}(\mathcal{F})=\min\left\{\alpha<\omega_{1}: \mathcal{F}\mbox{\boldmath{ is }}\Pi^{0}_{1+\alpha} \mbox{\boldmath{-separated from }} \mathcal{F}^{*}\right\}$.
	\item If $\mathcal{F}$ and $\mathcal{G}$ are filters and $\mathcal{F}\subset\mathcal{G}$, then ${\rm rk}(\mathcal{F})\leq{\rm rk}(\mathcal{G})$. 
	\item A filter is of rank $0$ if and only if it is not free.
	\item If there exists a quasi-homomorphism from $\mathcal{F}$ to $\mathcal{G}$, then ${\rm rk}(\mathcal{G})\leq{\rm rk}(\mathcal{F})$.
	\item Two isomorphic filters have the same rank.
\end{itemize}
\end{prop}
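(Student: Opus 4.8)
The plan is to verify the five items in turn; only the fourth requires real work, and the fifth is an immediate consequence of it. For the first item the key observation is that complementation $c\colon 2^{I}\to 2^{I}$, $c(A)=I\setminus A$, is a homeomorphism of the Cantor space which interchanges $\mathcal{F}$ and $\mathcal{F}^{*}$ (since $(\mathcal{F}^{*})^{*}=\mathcal{F}$) and preserves every Borel pointclass. Hence, if $S\in\Sigma^{0}_{1+\alpha}$ separates $\mathcal{F}$ from $\mathcal{F}^{*}$, then $2^{I}\setminus S\in\Pi^{0}_{1+\alpha}$ separates $\mathcal{F}^{*}$ from $\mathcal{F}$, and applying $c$ turns this into a $\Pi^{0}_{1+\alpha}$ set separating $\mathcal{F}$ from $\mathcal{F}^{*}$; as $c$ is an involution the converse is identical, so the two separation properties hold for exactly the same $\alpha$ and their minima agree. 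For the second item, $\mathcal{F}\subset\mathcal{G}$ gives $\mathcal{F}^{*}\subset\mathcal{G}^{*}$, so any $S$ witnessing that $\mathcal{G}$ is $\Sigma^{0}_{1+\alpha}$-separated from $\mathcal{G}^{*}$ satisfies $\mathcal{F}\subset\mathcal{G}\subset S$ and $\mathcal{F}^{*}\cap S\subset\mathcal{G}^{*}\cap S=\emptyset$, witnessing the separation of $\mathcal{F}$ from $\mathcal{F}^{*}$ at the same level.

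For the third item, recall $1+0=1$, so rank $0$ means $\Sigma^{0}_{1}$- (open-) separation. If $\mathcal{F}$ is not free, fix $x\in\bigcap\mathcal{F}$; the basic clopen set $\{A\subset I : x\in A\}$ contains $\mathcal{F}$ and misses $\mathcal{F}^{*}$ (as $B\in\mathcal{F}^{*}$ forces $x\notin B$), giving rank $0$. Conversely, suppose $\mathcal{F}$ is free yet some open $U$ separates it from $\mathcal{F}^{*}$. Since $I\in\mathcal{F}\subset U$, there is a finite $F\subset I$ with $\{A : F\subset A\}\subset U$. Freeness yields, for each $i\in F$, a set $A_{i}\in\mathcal{F}$ with $i\notin A_{i}$; then $B=\bigcap_{i\in F}A_{i}\in\mathcal{F}$ is disjoint from $F$, so $I\setminus B\supset F$ lies in $U$ while also $I\setminus B\in\mathcal{F}^{*}$, contradicting $U\cap\mathcal{F}^{*}=\emptyset$. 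Hence a free filter has rank at least $1$, completing the equivalence.

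For the fourth item, the main point, let $\pi\colon F\to{\rm dom}(\mathcal{G})$ be a quasi-homomorphism with $F\in\mathcal{F}$, and define $\Phi\colon 2^{{\rm dom}(\mathcal{G})}\to 2^{{\rm dom}(\mathcal{F})}$ by $\Phi(B)=\pi^{-1}[B]$. Each coordinate of $\Phi(B)$ is either constantly $0$ (on ${\rm dom}(\mathcal{F})\setminus F$) or equal to the coordinate $B(\pi(i))$, so $\Phi$ is continuous. By definition $\Phi[\mathcal{G}]\subset\mathcal{F}$; and for $C\in\mathcal{G}^{*}$ we have ${\rm dom}(\mathcal{G})\setminus C\in\mathcal{G}$, hence $\pi^{-1}[{\rm dom}(\mathcal{G})\setminus C]=F\setminus\pi^{-1}[C]\in\mathcal{F}$, so its superset ${\rm dom}(\mathcal{F})\setminus\pi^{-1}[C]$ lies in $\mathcal{F}$ by upward closure, whence $\Phi(C)=\pi^{-1}[C]\in\mathcal{F}^{*}$; that is, $\Phi[\mathcal{G}^{*}]\subset\mathcal{F}^{*}$. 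Taking $\alpha={\rm rk}(\mathcal{F})$ and a separating $S\in\Sigma^{0}_{1+\alpha}$, the set $\Phi^{-1}[S]\in\Sigma^{0}_{1+\alpha}$ contains $\mathcal{G}$ and misses $\mathcal{G}^{*}$, giving ${\rm rk}(\mathcal{G})\leq{\rm rk}(\mathcal{F})$. I expect the verification $\Phi[\mathcal{G}^{*}]\subset\mathcal{F}^{*}$, which is exactly where upward closure of $\mathcal{F}$ enters, to be the only delicate bookkeeping.

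Finally, an isomorphism $\sigma\colon{\rm dom}(\mathcal{G})\to{\rm dom}(\mathcal{F})$ is, as a total map defined on ${\rm dom}(\mathcal{G})\in\mathcal{G}$, a quasi-homomorphism from $\mathcal{G}$ to $\mathcal{F}$ (since $\sigma^{-1}[F']\in\mathcal{G}$ for $F'\in\mathcal{F}$), while $\sigma^{-1}$ is a quasi-homomorphism from $\mathcal{F}$ to $\mathcal{G}$ (since $\sigma[G']\in\mathcal{F}$ for $G'\in\mathcal{G}$). Two applications of the fourth item then yield ${\rm rk}(\mathcal{F})\leq{\rm rk}(\mathcal{G})$ and ${\rm rk}(\mathcal{G})\leq{\rm rk}(\mathcal{F})$, hence equality. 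The whole proposition is thus routine once the complementation symmetry (item one) and the pullback map $\Phi$ (item four) are in place, the latter being the only step carrying genuine content.
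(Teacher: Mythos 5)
Your proof is correct in every item, but you should know that the paper's own ``proof'' consists of a single remark plus a citation: it establishes only the first item, by precisely your argument (the involution $A\mapsto I\setminus A$ is a homeomorphism of $2^{I}$ exchanging $\mathcal{F}$ and $\mathcal{F}^{*}$, hence exchanging $\Sigma^{0}_{1+\alpha}$- and $\Pi^{0}_{1+\alpha}$-separation), and for items 2--5 it simply refers to Debs and Saint Raymond. So for the bulk of the statement you are not paralleling the paper but reconstructing proofs it outsources. Your reconstruction is the natural one, and it has the virtue of running all the nontrivial items through one mechanism: whenever there is a continuous map $\Phi$ between the relevant Cantor spaces with $\Phi[\mathcal{G}]\subset\mathcal{F}$ and $\Phi[\mathcal{G}^{*}]\subset\mathcal{F}^{*}$, pulling back a separating set yields ${\rm rk}(\mathcal{G})\leq{\rm rk}(\mathcal{F})$; item 2 is the case $\Phi=\mathrm{id}$, item 4 is $\Phi(B)=\pi^{-1}[B]$ (your check that $\Phi[\mathcal{G}^{*}]\subset\mathcal{F}^{*}$, using $\pi^{-1}[C]\subset F$ and upward closure of $\mathcal{F}$, is exactly the point that needs care), and item 5 follows by applying item 4 in both directions after verifying that an isomorphism and its inverse are both quasi-homomorphisms. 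Item 3 you handle by a direct combinatorial argument: clopen separation via $\{A: x\in A\}$ when $x\in\bigcap\mathcal{F}$, and for free $\mathcal{F}$ the observation that any basic neighborhood $\{A: F\subset A\}$ of $I$ inside a putative separating open set is defeated by $I\setminus B$ for some $B\in\mathcal{F}$ disjoint from the finite set $F$; this is also correct, with the only (trivial) degenerate case being $F=\emptyset$, where one takes $B=I$. In short: correct, identical to the paper where the paper actually argues, and usefully self-contained where the paper merely cites.
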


The first property in the above Proposition is a simple consequence of the fact that the canonical involution $A\mapsto A^c$ is a homeomorphism. Proofs of the remaining properties can be found in \cite{Debs}.

\subsection{Katětov filters}
\label{SubsectionKatetov}

For $\alpha<\omega$, Katětov filters $\mathcal{N}_{\alpha}$ are defined inductively by:
\begin{itemize}
	\item $\mathcal{N}_{0}=\left\{\left\{0\right\}\right\}$ is a unique filter on the set $\left\{0\right\}$,
	\item $\mathcal{N}_{\alpha+1}=\mathcal{F}_{Fr}\times\mathcal{N}_{\alpha}$,
\end{itemize}

The idea of such filters comes from Katětov (\cite{Kat}) and Grimeisen (\cite{Grimeisen}), who worked on this problem independently. Note that actually $\mathcal{N}_1$ is the Fr\'echet filter $\mathcal{F}_{Fr}$ and $\mathcal{N}_2$ is the dual filter of the well known and extensively studied ideal $Fin\times Fin$. These filters play fundamental role in the theory of ranks of filters, which follows from:

\begin{thm}\cite[Thm. 6.5]{Debs}\\
\label{ThmDebsB}
For every $\alpha<\omega$, the filter $\mathcal{N}_\alpha$ has rank $\alpha$ (and therefore generates the Borel class $\mathcal{B}_\alpha\left(X\right)$, for any zero-dimensional Polish space $X$ i.e., $\mathcal{C}_{\mathcal{N}_\alpha}\left(X\right)=\mathcal{B}_\alpha\left(X\right)$).
\end{thm}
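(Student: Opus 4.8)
The plan is to establish ${\rm rk}(\mathcal{N}_\alpha)=\alpha$ by induction on $\alpha<\omega$, reducing each inductive step to the Fubini-sum estimates of Theorem \ref{ThmDebsA}. For the base case $\alpha=0$ I would note that $\mathcal{N}_0=\{\{0\}\}$ is principal, hence not free (indeed $\bigcap\mathcal{N}_0=\{0\}\neq\emptyset$), so the rank-$0$ characterization recalled above gives ${\rm rk}(\mathcal{N}_0)=0$. The same characterization supplies the one auxiliary fact used repeatedly below: the Fr\'echet filter $\mathcal{F}_{Fr}$ is free, so ${\rm rk}(\mathcal{F}_{Fr})\geq 1$.

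For the inductive step, assume ${\rm rk}(\mathcal{N}_\alpha)=\alpha$ and recall that $\mathcal{N}_{\alpha+1}=\mathcal{F}_{Fr}\times\mathcal{N}_\alpha$ is precisely the $\mathcal{F}_{Fr}$-Fubini sum of the constant family $(\mathcal{F}_i)_{i\in\omega}$ with every $\mathcal{F}_i=\mathcal{N}_\alpha$. I would then apply Theorem \ref{ThmDebsA} with outer filter $\mathcal{F}=\mathcal{F}_{Fr}$, inner filters $\mathcal{F}_i=\mathcal{N}_\alpha$, and Fubini sum $\mathcal{G}=\mathcal{N}_{\alpha+1}$, taking $J=\omega\in\mathcal{F}_{Fr}$. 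Part~(c), with $\xi=\alpha$, yields the upper bound ${\rm rk}(\mathcal{N}_{\alpha+1})\leq\xi+1=\alpha+1$; part~(a), with $\xi=\alpha$ and outer rank $\geq 1$ supplied by the freeness of $\mathcal{F}_{Fr}$, yields the lower bound ${\rm rk}(\mathcal{N}_{\alpha+1})\geq\xi+1=\alpha+1$. Since $\alpha<\omega$, ordinal addition is here the ordinary one, so no question of non-commutativity arises, and the two bounds combine to ${\rm rk}(\mathcal{N}_{\alpha+1})=\alpha+1$, closing the induction. As a special case this recovers ${\rm rk}(\mathcal{N}_1)={\rm rk}(\mathcal{F}_{Fr})=1$.

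The parenthetical assertion is then immediate from Theorem \ref{ThmRank}(c): once ${\rm rk}(\mathcal{N}_\alpha)=\alpha$ is known, that theorem applied to the analytic filter $\mathcal{N}_\alpha$ gives $\mathcal{C}_{\mathcal{N}_\alpha}(X)=\mathcal{B}_\alpha(X)$ for every zero-dimensional Polish space $X$.

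I do not expect a serious obstacle, since Theorem \ref{ThmDebsA} carries essentially all the weight and what remains is bookkeeping. The two points to watch are, first, that $\mathcal{N}_{\alpha+1}$ genuinely satisfies the hypotheses of parts~(a) and~(c)---that it really is the $\mathcal{F}_{Fr}$-Fubini sum of copies of $\mathcal{N}_\alpha$ rather than a reindexed variant---and, second, that the lower bound at each step uses only the independently established inequality ${\rm rk}(\mathcal{F}_{Fr})\geq 1$ and not the value ${\rm rk}(\mathcal{F}_{Fr})=1$ produced by the induction, so that no circularity creeps in.
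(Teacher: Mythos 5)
The first thing to note is that the paper contains no proof of this statement at all: it is imported wholesale from \cite[Thm.~6.5]{Debs}, so there is no internal argument to compare yours against. Within the paper's framework your derivation is correct and is a clean reconstruction from the other quoted results: the base case uses the characterization of rank-$0$ filters as the non-free ones; the upper bound at each successor step is Theorem \ref{ThmDebsA}(c) with $\xi=\alpha$; the lower bound is Theorem \ref{ThmDebsA}(a) with outer rank $\geq 1$ obtained solely from the freeness of $\mathcal{F}_{Fr}$, so, as you correctly flag, no circularity enters the induction; and the parenthetical claim does follow from Theorem \ref{ThmRank}(c). Two caveats are worth recording. First, rank is defined in this paper only for analytic filters, so for ${\rm rk}(\mathcal{N}_\alpha)$ to be meaningful (and for Theorems \ref{ThmRank} and \ref{ThmDebsA} to be applicable) you need the fact that each $\mathcal{N}_\alpha$, $\alpha<\omega$, is Borel; this is supplied by the same induction --- a Fubini product of Borel filters is Borel (alternatively, represent the product as an $\mathcal{F}_{Fr}$-limit as in the proof of Corollary \ref{CorFubini} and apply Proposition \ref{Prop1}(a)) --- but it should be stated, since otherwise even the inductive hypothesis ${\rm rk}(\mathcal{N}_\alpha)=\alpha$ is not a well-formed assertion. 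Second, your argument proves the theorem only \emph{relative to} Theorem \ref{ThmDebsA}, which is Proposition 4.4 of the very paper \cite{Debs} from which the statement is quoted; the Fubini-sum induction you run is essentially how such rank computations are carried out at the source, so this should be viewed as a verification that the quoted results fit together consistently rather than as an independent proof of the cited theorem.
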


\subsection{Isomorphic copies of filters}

Let $\mathcal{F}$ and $\mathcal{G}$ be filters. We shall write $\mathcal{F}\sqsubseteq\mathcal{G}$ and say that $\mathcal{G}$ contains an isomorphic copy of $\mathcal{F}$ if there exists a bijection $\sigma:{\rm dom}(\mathcal{F})\rightarrow{\rm dom}(\mathcal{G)}$ such that $\sigma(A)\in\mathcal{G}$ for every element $A\in \mathcal{F}$.

For two given filters $\mathcal{F}$ and $\mathcal{G}$, if $\mathcal{F}\sqsubseteq\mathcal{G}$, then ${\rm rk}(\mathcal{F})\leq{\rm rk}(\mathcal{G})$ \cite[Lem. 7.2]{Debs}. 

\begin{thm}
\label{ThmIsomCopies}
For $\alpha\in\left\{1,2\right\}$, an analytic filter $\mathcal{F}$ is of rank greater than or equal to $\alpha$ if and only if it contains an isomorphic copy of $\mathcal{N}_{\alpha}$.
\end{thm}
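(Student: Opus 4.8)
The plan is to prove the two implications separately, with the reverse implication for $\alpha=2$ as the main case. The forward implication is immediate for both values of $\alpha$: if $\mathcal{N}_\alpha\sqsubseteq\mathcal{F}$, then by the quoted monotonicity of rank under $\sqsubseteq$ (\cite[Lem. 7.2]{Debs}) and Theorem \ref{ThmDebsB} we get ${\rm rk}(\mathcal{F})\geq{\rm rk}(\mathcal{N}_\alpha)=\alpha$. For the reverse implication with $\alpha=1$, observe that ${\rm rk}(\mathcal{F})\geq 1$ means, by the Proposition on properties of filter rank, that $\mathcal{F}$ is free; and a filter on a countably infinite set is free precisely when it contains every cofinite set (for a finite $e\subset I$, freeness supplies for each $x\in e$ a set $F_x\in\mathcal{F}$ with $x\notin F_x$, and $\bigcap_{x\in e}F_x\in\mathcal{F}$ is disjoint from $e$, whence $I\setminus e\in\mathcal{F}$). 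Thus any bijection $\sigma\colon\omega\to{\rm dom}(\mathcal{F})$ carries each cofinite subset of $\omega$ to a cofinite subset of ${\rm dom}(\mathcal{F})$, which lies in $\mathcal{F}$; since $\mathcal{N}_1=\mathcal{F}_{Fr}$, this exhibits $\mathcal{N}_1\sqsubseteq\mathcal{F}$.

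The substantial part is the reverse implication for $\alpha=2$: assuming ${\rm rk}(\mathcal{F})\geq 2$, i.e. that $\mathcal{F}$ is not $\Sigma^0_2$-separated from $\mathcal{F}^*$, I must produce a bijection $\sigma\colon\omega\times\omega\to{\rm dom}(\mathcal{F})$ with $\sigma(B)\in\mathcal{F}$ for every $B\in\mathcal{N}_2$. Writing $\mathcal{I}=\mathcal{F}^*$ and unwinding the basis of $\mathcal{N}_2=\mathcal{F}_{Fr}\times\mathcal{F}_{Fr}$, I would first reformulate the goal: it is equivalent to finding a partition ${\rm dom}(\mathcal{F})=\bigsqcup_{n}C_n$ into infinite sets $C_n\in\mathcal{I}$, with enumerations $C_n=\left\{c_{n,m}:m\in\omega\right\}$, such that the induced map $(n,m)\mapsto c_{n,m}$ sends $Fin\times Fin$ into $\mathcal{I}$. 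Since $\mathcal{I}$ is downward closed and closed under finite unions, it suffices to guarantee that every finite union of columns and every bounded transversal $\left\{c_{n,m}:m\leq h(n)\right\}$, for $h\colon\omega\to\omega$, lies in $\mathcal{I}$; in other words, I want a spanning isomorphic copy of the ideal $Fin\times Fin$ inside $\mathcal{F}^*$. The easy constraints (each $C_n\in\mathcal{I}$, the union being all of ${\rm dom}(\mathcal{F})$) are recorded first, isolating the genuinely two-dimensional requirement that arbitrary finite-fibred transversals stay in the ideal.

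To build this matrix I would exploit the hypothesis through a Hurewicz/Louveau--Saint-Raymond-type separation dichotomy: for disjoint analytic sets, failure of $\Sigma^0_2$-separation yields a continuous reduction of a fixed non-$\Sigma^0_2$-separable pair into $(\mathcal{F},\mathcal{F}^*)$, and one takes this canonical pair to be $(\mathcal{N}_2,\mathcal{N}_2^*)$ on $\omega\times\omega$, which is legitimate because $\mathcal{N}_2$ has rank exactly $2$ (Theorem \ref{ThmDebsB}) and so $(\mathcal{N}_2,\mathcal{N}_2^*)$ is a minimal obstruction to $F_\sigma$-separation. The reduction transports the column/transversal structure of $Fin\times Fin$ into $\mathcal{I}$, and one then promotes the merely continuous reduction to a genuine bijection $\sigma$ by a fusion argument, using upward closure of $\mathcal{F}$ and closure of $\mathcal{I}$ under subsets to absorb the finite discrepancies introduced by continuity and spreading the image over all of ${\rm dom}(\mathcal{F})$ to make $\sigma$ onto. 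The main obstacle is precisely this extraction: the hypothesis is a negative, global statement (no $F_\sigma$ set separates), whereas a copy of $\mathcal{N}_2$ is a positive, highly structured object, and one must simultaneously secure that the columns lie in $\mathcal{I}$ yet partition the whole domain, that every bounded transversal stays in $\mathcal{I}$, and that the map is a bijection onto ${\rm dom}(\mathcal{F})$ respecting the full basis of $\mathcal{N}_2$; a naive stepwise recursion founders here, since controlling membership of an infinite transversal in a general analytic $\mathcal{I}$ has no stepwise submeasure to lean on, which is exactly what the dichotomy supplies. I would keep the construction strictly two-level, as this is what forces the restriction $\alpha\leq 2$: once $\alpha\geq 3$ the analogous extraction has no counterpart guaranteeing $\mathcal{N}_\alpha\sqsubseteq\mathcal{F}$ from ${\rm rk}(\mathcal{F})\geq\alpha$.
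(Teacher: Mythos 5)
The paper itself does not prove Theorem \ref{ThmIsomCopies}: it quotes it from \cite[Prop. 7.3 and Thm. 7.5]{Debs}, noting that the Borel case is also in \cite[Thm. 4]{Reclaw}, so your proposal has to stand on its own as a proof. Your forward implication (monotonicity of rank under $\sqsubseteq$ plus Theorem \ref{ThmDebsB}) and your backward implication for $\alpha=1$ (a free filter on a countably infinite set contains all cofinite sets) are both correct, but these are the routine parts. The whole weight of the theorem is in the implication ${\rm rk}(\mathcal{F})\geq 2\Rightarrow\mathcal{N}_2\sqsubseteq\mathcal{F}$, and there your argument fails at its central step: the separation dichotomy you invoke is false as stated.

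The Hurewicz-type dichotomy (Kechris--Louveau--Woodin, Louveau--Saint Raymond) does produce, from non-$\Sigma^0_2$-separability of a disjoint analytic pair, a continuous embedding of a canonical pair into it --- but that canonical pair is $\left(2^\omega\setminus Q,\,Q\right)$ with $Q$ countable dense in $2^\omega$, and it cannot be traded for $(\mathcal{N}_2,\mathcal{N}_2^*)$. The fact that ${\rm rk}(\mathcal{N}_2)=2$ makes $(\mathcal{N}_2,\mathcal{N}_2^*)$ \emph{an} obstruction to $\Sigma^0_2$-separation, not a \emph{minimal} one under continuous reducibility, and minimality is exactly what your step needs. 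Indeed minimality provably fails: $\left(2^\omega\setminus Q,\,Q\right)$ is a disjoint analytic pair that is not $\Sigma^0_2$-separable, yet no continuous $u:2^{\omega\times\omega}\rightarrow 2^\omega$ can satisfy $u\left[\mathcal{N}_2\right]\subseteq 2^\omega\setminus Q$ and $u\left[\mathcal{N}_2^*\right]\subseteq Q$. Otherwise $u^{-1}\left[Q\right]=\bigcup_{q\in Q}u^{-1}(q)$ would be a $\Sigma^0_2$ set (a countable union of closed fibers) containing $\mathcal{N}_2^*$ and disjoint from $\mathcal{N}_2$, so its complement would be a $\Pi^0_2$ set separating $\mathcal{N}_2$ from $\mathcal{N}_2^*$, contradicting ${\rm rk}(\mathcal{N}_2)=2$ by the first property in the Proposition of Subsection 2.2. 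So the very fact you cite to legitimize the dichotomy refutes it; and restricted to pairs of the form $(\mathcal{F},\mathcal{F}^*)$ the statement you want is not a quotable dichotomy but (a weakened form of) the theorem being proved. Separately, even if such a continuous reduction existed, your ``fusion argument'' promoting a continuous map of hyperspaces $2^{\omega\times\omega}\rightarrow 2^{{\rm dom}(\mathcal{F})}$ to a bijection $\omega\times\omega\rightarrow{\rm dom}(\mathcal{F})$ inducing it is not a routine repair: a continuous map between these Cantor spaces need not be induced by any point map at all, and extracting this combinatorial structure from the descriptive hypothesis is precisely the hard content of \cite[Thm. 7.5]{Debs}. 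If you are content with Borel $\mathcal{F}$, the machinery already quoted in the paper gives a correct short proof: ${\rm rk}(\mathcal{F})\geq 2$ rules out a winning strategy for Player II in $G(\mathcal{F})$, since by Lemma \ref{Lem1} such a strategy yields $\mathcal{F}$-universal sets $\left\{Z_n^k\right\}$ $\omega$-diagonalizing $\mathcal{F}$, whence $\bigcup_{n}\bigcup_{m}\bigcap_{k>m}\left\{M:Z_n^k\cap M\neq\emptyset\right\}$ is a $\Sigma^0_2$ separator of $\mathcal{F}$ from $\mathcal{F}^*$ (the computation in the proof of Theorem \ref{ThmA}); by Lemma \ref{Lem3} the game is determined, so Player I wins, and Lemma \ref{Lem2} hands you $\mathcal{N}_2\sqsubseteq\mathcal{F}$. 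For analytic $\mathcal{F}$ this route is unavailable in ZFC (the payoff set is analytic), which is why the general case genuinely requires the argument of \cite{Debs}.
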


The proof can be found in \cite[Prop. 7.3 and Thm. 7.5]{Debs}. Also in \cite[Thm. 4]{Reclaw} the authors prove a similar result, but with an additional assumption: that $\mathcal{F}$ is Borel.

\subsection{Filters of countable type}

The idea of filters of countable type was introduced by Mauldin, Preiss and Weizs\"acker in \cite{Preiss}. The family of filters of countable type on $X$ is the smallest class of filters on $X$ containing the principal ultrafilters and closed under the operations of countable intersection and increasing countable union. The theory of filters of countable type was studied also by Fremlin, who gave the following equivalent definition \cite{Fremlin}:

\begin{defin}
We define families $\mathfrak{F}_\alpha$ inductively:
\begin{itemize}
	\item $\mathfrak{F}_0$ is the family of all principal ultrafilters on $X$.
	\item $\mathfrak{F}_\alpha$ is the family of all filters on $X$, which are equal to $\lim_{i\rightarrow\mathcal{F}_{Fr}}\mathcal{F}_i$, where $\mathcal{F}_i\in\bigcup_{\xi<\alpha}\mathfrak{F}_\xi$ for every $i\in\omega$.
\end{itemize}
Then $\mathfrak{F}_C=\bigcup_{\alpha<\omega_1}\mathfrak{F}_\alpha$ is the set of all filters of countable type.\\
Moreover, for a filter $\mathcal{F}$ of countable type ${\rm ct}(\mathcal{F})=\min\left\{\alpha<\omega_1:\mathcal{F}\in\mathfrak{F}_\alpha \right\}$.
\end{defin}

\section{The upper estimate of the ranks of limits of filter sequences}

In this Section we investigate the ranks of $\mathcal{F}$-limits of filters. We can estimate from above the Borel class and ranks of limits of filters.

\begin{prop}
\label{Prop1}
Let $\mathcal{F}$ be a filter on $I$ and $\left(\mathcal{F}_i\right)_{i\in I}$ be a family of filters on $X$. Let also $\Gamma^0_\xi$ denote $\Sigma^0_\xi$ or $\Pi^0_\xi$. 
		\item a) Suppose that there is $J\in\mathcal{F}$ such that for each $i\in J$ we have $\mathcal{F}_i\in\Gamma^0_{\beta_i}$ for a certain $\beta_i<\beta$. If $\mathcal{F}\in\Gamma^0_{1+\alpha}$, then $\lim_{\mathcal{F}}\mathcal{F}_i\in\Gamma^0_{1+\beta+\alpha}$.
		\item b) Suppose that there is $J\in\mathcal{F}$ such that for each $i\in J$ we have ${\rm rk}(\mathcal{F}_i)\leq\beta$. If ${\rm rk}(\mathcal{F})\leq\alpha$, then ${\rm rk}\left(\lim_{\mathcal{F}}\mathcal{F}_i\right)\leq\beta+1+\alpha$.
\end{prop}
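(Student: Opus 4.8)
The plan is to realize $\lim_\mathcal{F}\mathcal{F}_i$ as the preimage of $\mathcal{F}$ (or, for (b), of a separator for $\mathcal{F}$) under a single coordinatewise map, and then to read off its complexity from a standard pullback estimate for Borel pointclasses. First I would record the reduction to $J$: since $J\in\mathcal{F}$ and $\mathcal{F}$ is closed under finite intersections and supersets, for every $A\subset X$ we have $\{i\in I:A\in\mathcal{F}_i\}\in\mathcal{F}$ if and only if $\{i\in J:A\in\mathcal{F}_i\}\in\mathcal{F}$. This lets me work only with the coordinates $i\in J$, where the complexity (resp. the rank) of $\mathcal{F}_i$ is controlled, and ignore the uncontrolled coordinates in $I\setminus J$.

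For part (a), I would define $\Phi:2^X\to 2^J$ by $\Phi(A)=\{i\in J:A\in\mathcal{F}_i\}$, viewing $2^J$ as the closed subset of $2^I$ consisting of sets contained in $J$. Then $\lim_\mathcal{F}\mathcal{F}_i=\Phi^{-1}(\mathcal{F}\cap 2^J)$, and $\mathcal{F}\cap 2^J$ is $\Gamma^0_{1+\alpha}$ in $2^J$ as the trace of a $\Gamma^0_{1+\alpha}$ set on $2^J$. The preimage under $\Phi$ of the subbasic clopen set $\{B:i\in B\}$ is exactly $\mathcal{F}_i$, and that of $\{B:i\notin B\}$ its complement; since $\beta_i<\beta$ these lie in $\Sigma^0_{\beta_i}$ and $\Pi^0_{\beta_i}$, hence both in $\Delta^0_\beta$. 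Consequently preimages of open sets are $\Sigma^0_\beta$, i.e. $\Phi$ is $\Sigma^0_\beta$-measurable. I would then invoke the routine pullback lemma (proved by induction on $\alpha$ from the clopen case): for a $\Sigma^0_\beta$-measurable map the preimage of a $\Gamma^0_{1+\alpha}$ set is $\Gamma^0_{\beta+\alpha}$. Since $\Gamma^0_{\beta+\alpha}\subset\Gamma^0_{1+\beta+\alpha}$, part (a) follows.

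For part (b), I would fix a $\Sigma^0_{1+\alpha}$-separator $S$ of $\mathcal{F}$ from $\mathcal{F}^*$ and, for each $i\in J$, a $\Sigma^0_{1+\beta}$-separator $S_i$ of $\mathcal{F}_i$ from $\mathcal{F}_i^*$ (these exist by ${\rm rk}(\mathcal{F})\le\alpha$ and ${\rm rk}(\mathcal{F}_i)\le\beta$), and run the same construction with the separators in place of the filters: let $T:2^X\to 2^J$, $T(A)=\{i\in J:A\in S_i\}$, and consider $T^{-1}(S)$. The key verification is that $T^{-1}(S)$ separates $\lim_\mathcal{F}\mathcal{F}_i$ from its dual ideal. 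If $A\in\lim_\mathcal{F}\mathcal{F}_i$, then $\{i\in J:A\in\mathcal{F}_i\}\in\mathcal{F}$ together with $\mathcal{F}_i\subset S_i$ force $T(A)\in\mathcal{F}\subset S$; if $A\in(\lim_\mathcal{F}\mathcal{F}_i)^*$, then unwinding the dual ideal gives $\{i\in J:A\in\mathcal{F}_i^*\}\in\mathcal{F}$, and $\mathcal{F}_i^*\cap S_i=\emptyset$ forces $J\setminus T(A)\in\mathcal{F}$, i.e. $T(A)\in\mathcal{F}^*$, whence $T(A)\notin S$. Finally I estimate the complexity of $T^{-1}(S)$: the separators now live only at level $1+\beta$, so the preimage of a subbasic clopen set is $\Sigma^0_{1+\beta}$ or $\Pi^0_{1+\beta}$, finite intersections sit in $\Delta^0_{(1+\beta)+1}$, and $T$ is $\Sigma^0_{(1+\beta)+1}$-measurable; the pullback lemma then gives $T^{-1}(S)\in\Sigma^0_{(1+\beta)+1+\alpha}=\Sigma^0_{1+(\beta+1+\alpha)}$, so ${\rm rk}(\lim_\mathcal{F}\mathcal{F}_i)\le\beta+1+\alpha$.

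The main obstacle is the bookkeeping in the pullback lemma and the resulting ordinal arithmetic, in particular pinning down where the extra $+1$ in part (b) is forced: it comes from the separators $S_i$ living only at level $1+\beta$, so that the complement coordinate conditions $\Pi^0_{1+\beta}$ cannot be absorbed into $\Sigma^0_{1+\beta}$ and push the measurability of $T$ up by one, whereas in part (a) the strict inequality $\beta_i<\beta$ lets all coordinate conditions be absorbed into $\Delta^0_\beta$ and no such jump occurs. Secondary points to check carefully are that relativization to the closed set $2^J\subset 2^I$ preserves the pointclass $\Gamma^0_{1+\alpha}$, and that the separation identities for $(\lim_\mathcal{F}\mathcal{F}_i)^*$ are applied with the quantifier ranging over $i\in J$ throughout.
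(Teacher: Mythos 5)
Your proposal is correct and follows essentially the same route as the paper's own proof: a coordinatewise map $A\mapsto\{i\in J:A\in\mathcal{F}_i\}$ (resp. $A\mapsto\{i\in J:A\in S_i\}$) pulling back $\mathcal{F}$ in part (a) and a $\Sigma^0_{1+\alpha}$-separator $S$ in part (b), with the same two-sided verification that the preimage of $S$ separates $\lim_{\mathcal{F}}\mathcal{F}_i$ from its dual ideal. The only differences are cosmetic: you relativize to $2^J$ rather than mapping into $\mathcal{P}(I)$ with empty coordinates off $J$, and your $\Sigma^0_\beta$-measurability bookkeeping in part (a) is marginally sharper than the paper's Baire-class-$\beta$ estimate before you relax it to the stated bound.
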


\begin{proof}
In order to prove part a) we define functions $\varphi_i:\mathcal{P}(X)\rightarrow 2$ by
$$\varphi_i(A)=\left\{\begin{array}{cl}
1 & $, if  $i\in J$ and $A\in\mathcal{F}_i\\
0 & $, if  not $\\
\end{array}\right.$$
Every such a function is of class $\beta$. The function $\varphi:\mathcal{P}(X)\rightarrow\mathcal{P}(I)$ defined as $\varphi(A)=\left\{i\in I: \varphi_i(A)=1\right\}$ is also of class $\beta$. Since $\mathcal{F}\in\Gamma^0_{1+\alpha}(\mathcal{P}(I))$ we have $\lim_{\mathcal{F}}\mathcal{F}_i=\varphi^{-1}[\mathcal{F}]\in\Gamma^0_{1+\beta+\alpha}(\mathcal{P}(X))$.\\
The proof of part b) is similar: let $S\in\Sigma^0_{1+\alpha}$ be a set separating the filter $\mathcal{F}$ from its dual ideal and $S_i\in\Sigma^0_{1+\beta}$ be sets separating the filters $\mathcal{F}_i$ from their dual ideals. Define functions $\psi_i:\mathcal{P}(X)\rightarrow 2$ by
$$\psi_i(A)=\left\{\begin{array}{cl}
1 & $, if  $i\in J$ and $A\in S_i\\
0 & $, if not $\\
\end{array}\right.$$
These functions are of class $\beta+1$. The function $\psi:\mathcal{P}(X)\rightarrow\mathcal{P}(I)$ defined as $\psi(A)=\left\{i\in I: \psi_i(A)=1\right\}$ is also of class $\beta+1$. Then the set $\psi^{-1}[S]$  belongs to $\Sigma^0_{1+\beta+1+\alpha}(\mathcal{P}(X))$. Now it is sufficient to show that $\psi^{-1}[S]$ separates $\lim_{\mathcal{F}}\mathcal{F}_i$ from its dual ideal.\\
Because $\mathcal{F}_i\subset S_i$, we have that $\varphi_i[A]=1$ implies $\psi_i[A]=1$ for every $i$ and $A$.
It follows in turn that $\lim_{\mathcal{F}}\mathcal{F}_i=\varphi^{-1}[\mathcal{F}]\subset\psi^{-1}[\mathcal{F}]$. Since $\mathcal{F}\subset S$, we have $\lim_{\mathcal{F}}\mathcal{F}_i\subset\psi^{-1}[S]$.\\
On the other hand, if $A\in\left(\lim_{\mathcal{F}}\mathcal{F}_i\right)^*$, then $\left\{i\in I: A\in\mathcal{F}^*_i\right\}\in\mathcal{F}$ and $\left\{i\in I: A\notin\mathcal{F}^*_i\right\}\in\mathcal{F}^*$. Notice that $$\psi(A)\subset\left\{i\in I: A\in S_i\right\}\subset\left\{i\in I: A\notin\mathcal{F}^*_i\right\}.$$
Hence $\psi(A)\in\mathcal{F}^*$ and $\psi(A)\notin S$.
\end{proof}

Actually in several cases we can estimate the ranks of limits of filters even more precisely. But before showing this, we introduce a game $G(\mathcal{F})$.

For a filter $\mathcal{F}$, a set $Z=\left\{Z^k:k\in\omega\right\}\subset Fin\setminus\left\{\emptyset\right\}$ is called $\mathcal{F}$-universal if for every element $M\in\mathcal{F}$ there is $k\in\omega$ such that $Z^k\subset M$. We say that $\mathcal{F}$ is $\omega$-diagonalizable by $\mathcal{F}$-universal sets if there are $\mathcal{F}$-universal sets $Z_n=\left\{Z^k_n:k\in\omega\right\}$ such that
$$\forall_{M\in\mathcal{F}}\exists_{n\in\omega}\forall_{k\in\omega}^{\infty}Z_n^k\cap M\neq\emptyset.$$

Next, for a filter $\mathcal{F}$, the game $G(\mathcal{F})$ is defined as follows: in the $n$th move Player I plays an element $C_n\in\mathcal{F}$ and then Player II plays a finite subset of integers $F_n\subset C_n$. Player I wins when $\bigcup_{n\in\omega}F_n\in\mathcal{F}^*$; otherwise Player II wins. This game was investigated by Laflamme \cite[Thm. 2.16]{Laflamme} and Recław and Laczkovich \cite{Reclaw}, who gave the following characterizations:

\begin{lem}
\label{Lem1}
Player II has a winning strategy in $G(\mathcal{F})$ if and only if $\mathcal{F}$ is $\omega$-diagonalizable by $\mathcal{F}$-universal sets.
\end{lem}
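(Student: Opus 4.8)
The plan is to prove both implications, after two reductions. First, I would dispose of the non-free case: if $\mathcal{F}$ is not free it has rank $0$, $\bigcap\mathcal{F}\neq\emptyset$, and both sides hold trivially — Player II wins by playing a single point $e\in\bigcap\mathcal{F}$ on the first move (legal since every $C_n\in\mathcal{F}$ contains $\bigcap\mathcal{F}$, and $\{e\}\notin\mathcal{F}^*$), while the constant family $Z_0=\{\{e\}\}$ witnesses $\omega$-diagonalizability. So I assume $\mathcal{F}$ is free, which gives $\mathcal{F}_{Fr}\subset\mathcal{F}$; hence deleting finitely many points from a set in $\mathcal{F}$ leaves it in $\mathcal{F}$. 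Second, I would observe that we may assume Player II never plays $\emptyset$: since $\mathcal{F}^*$ is closed under subsets, enlarging any of II's answers can only help II. Concretely, given a winning strategy $\tau$, let II run a shadow copy of $\tau$ and, whenever $\tau$ prescribes $\emptyset$, actually play $\{\min C_n\}$ while feeding $\tau$ its original empty answer; every resulting run then dominates a genuine $\tau$-run with the same moves of Player I, so its union still escapes $\mathcal{F}^*$. Thus II's answers may be taken to lie in $Fin\setminus\{\emptyset\}$.

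For the direction from $\omega$-diagonalizability to a winning strategy, suppose $(Z_n)_n$ are $\mathcal{F}$-universal sets with $\forall_{M\in\mathcal{F}}\exists_n\forall^{\infty}_{k}\,Z_n^k\cap M\neq\emptyset$. Fix a partition of $\omega$ into infinitely many infinite blocks, the $n$th being the stages devoted to $Z_n$. At a stage devoted to $n$, when Player I plays $C_m\in\mathcal{F}$, Player II should choose an index $k$ not used before for $n$ with $Z_n^k\subset C_m$ and play $F_m=Z_n^k$. Such a fresh $k$ exists: letting $E$ be the finite union of the $Z_n^{k'}$ already used for $n$, freeness gives $C_m\setminus E\in\mathcal{F}$, and $\mathcal{F}$-universality of $Z_n$ yields $k$ with $Z_n^k\subset C_m\setminus E$, which forces $k$ to be new since $Z_n^k\neq\emptyset$. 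Consequently, for every $n$ the index set $\{k:Z_n^k\subset\bigcup_m F_m\}$ is infinite. Now if Player I won, then $M:=\omega\setminus\bigcup_m F_m\in\mathcal{F}$, and the diagonalizing property would give an $n$ with $Z_n^k\cap M\neq\emptyset$, i.e. $Z_n^k\not\subset\bigcup_m F_m$, for all but finitely many $k$, contradicting the previous sentence. Hence Player II wins.

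The reverse implication is the heart of the argument and where I expect the main difficulty. Given a winning strategy $\tau$ for Player II (with nonempty answers, by the reduction above), for a position $p$ reached under $\tau$ set $Z(p)=\{\tau(p,C):C\in\mathcal{F}\}$, the family of all of II's possible next answers; this is a countable subfamily of $Fin\setminus\{\emptyset\}$. Each $Z(p)$ is automatically $\mathcal{F}$-universal: given $M\in\mathcal{F}$, the move $C=M$ produces $\tau(p,M)\subset M$, an element of $Z(p)$ contained in $M$. The obstacle is that there are uncountably many positions, whereas $\omega$-diagonalizability asks for only countably many universal sets. I would resolve this by building a countable tree $T$ of $\tau$-positions: at each node $p$, for every $F\in Z(p)$ choose one witness $C_F\in\mathcal{F}$ with $\tau(p,C_F)=F$ and declare the extension of $p$ by the pair $(C_F,F)$ to be a child of $p$. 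Then $T$ is countably branching, hence has countably many nodes $p_0,p_1,\dots$, and I put $Z_n=Z(p_n)$, enumerated as $\{Z_n^k:k\in\omega\}$.

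Finally I would check that these $Z_n$ witness $\omega$-diagonalizability, by contradiction. If they did not, some $M\in\mathcal{F}$ would satisfy, for every node $p_n$, that $Z_n^k\cap M=\emptyset$ for infinitely many (in particular at least one) $k$; equivalently, every node of $T$ has a child whose associated answer $F$ is disjoint from $M$. Following such children yields an infinite branch of $T$, that is, a full run of the game consistent with $\tau$ in which every answer of Player II lies in $\omega\setminus M$. Then $\bigcup_m F_m\subset\omega\setminus M$, so $M\subset\omega\setminus\bigcup_m F_m$ and thus $\omega\setminus\bigcup_m F_m\in\mathcal{F}$, i.e. $\bigcup_m F_m\in\mathcal{F}^*$ and Player I wins, contradicting that $\tau$ is winning for Player II. The two steps I expect to be most delicate are precisely this passage from the uncountable set of positions to a countable tree carrying enough universal sets, and, in the first implication, the bookkeeping guaranteeing that each $Z_n$ contributes infinitely many distinct indices.
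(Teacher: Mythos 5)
The paper does not prove this lemma at all: it is quoted as a known characterization, attributed to Laflamme \cite[Thm.\ 2.16]{Laflamme} and to Laczkovich--Rec\l aw \cite{Reclaw}, so there is no in-paper argument to compare against. Judged on its own, your proof is correct, and it supplies exactly the two ideas that make the cited result work. In the easy direction ($\omega$-diagonalizability $\Rightarrow$ Player II wins), your fresh-index bookkeeping is genuinely necessary: producing just one $Z_n^k$ inside $\bigcup_m F_m$ per $n$ would not contradict the diagonalization clause, which only fails on infinitely many $k$; your use of freeness (free $\Rightarrow$ the filter contains $\mathcal{F}_{Fr}$, so $C_m\setminus E\in\mathcal{F}$ for finite $E$) to force infinitely many distinct indices per block is the right fix, and you correctly isolated the non-free case where everything is trivial. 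In the hard direction, the two reductions (nonempty answers via the shadow-strategy argument, which is sound because $\mathcal{F}^*$ is downward closed) and the key observation that $Z(p)=\{\tau(p,C):C\in\mathcal{F}\}$ is countable because it sits inside $Fin$ --- despite $\mathcal{F}$ itself having size continuum --- are precisely what defeats the ``uncountably many positions'' obstacle; the countable witness tree $T$ then turns a failure of diagonalization into a run consistent with $\tau$ that Player II loses. The only cosmetic gaps are that one should say explicitly that II keeps playing $\{e\}$ at every move in the non-free case, and that finite families $Z(p)$ must be enumerated with repetitions to match the indexed form $\{Z_n^k:k\in\omega\}$ of the definition; neither affects the argument. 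In short: this is a complete, self-contained proof of a statement the paper outsources to the literature, and its structure matches the standard (Laflamme-style) argument.
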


\begin{lem}
\label{Lem2}
Player I has a winning strategy in $G(\mathcal{F})$ if and only if $\mathcal{F}$ contains an isomorphic copy of $\mathcal{N}_2$.
\end{lem}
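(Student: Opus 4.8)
The statement is an equivalence, so I would prove the two implications separately, using throughout that $\mathcal{N}_2=\mathcal{F}_{Fr}\times\mathcal{F}_{Fr}$ is the dual filter of $Fin\times Fin$, i.e. $\mathcal{N}_2^*=Fin\times Fin$. \emph{For the implication that a copy gives Player~I a win,} suppose $\mathcal{N}_2\sqsubseteq\mathcal{F}$, witnessed by a bijection $\sigma\colon\omega\times\omega\to I$ with $\sigma(A)\in\mathcal{F}$ for every $A\in\mathcal{N}_2$. I let Player~I play, at the $n$th move (ignoring Player~II's moves), the set $C_n=\sigma(\{(i,j):i\ge n\})$. The complement $\{(i,j):i<n\}$ has only finitely many infinite columns, so it lies in $Fin\times Fin=\mathcal{N}_2^*$; hence $\{(i,j):i\ge n\}\in\mathcal{N}_2$ and $C_n\in\mathcal{F}$, so this move is legal. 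If $F_n\subseteq C_n$ are Player~II's responses, then $\sigma^{-1}(F_n)\subseteq\{(i,j):i\ge n\}$, so the $i$th column of $\sigma^{-1}(\bigcup_nF_n)$ receives points only from the moves $n\le i$ and is therefore finite. Thus $\sigma^{-1}(\bigcup_nF_n)\in Fin\times Fin=\mathcal{N}_2^*$, which (as $\sigma$ is a bijection, so complementation commutes with it) means $(\bigcup_nF_n)^c\in\mathcal{F}$, i.e. $\bigcup_nF_n\in\mathcal{F}^*$. So this constant strategy wins every run.

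\emph{For the converse,} fix a winning strategy $\tau$ for Player~I. First I note that $\mathcal{F}$ must be free: if some $x\in\bigcap\mathcal{F}$, Player~II grabs $x$ from the first offered set and then $\bigcup_nF_n\notin\mathcal{F}^*$, defeating $\tau$; consequently $\mathcal{F}$ has no finite members and every response $C=\tau(\cdots)\in\mathcal{F}$ is infinite. Next I reduce the goal to a partition problem: it suffices to build a partition $I=\bigsqcup_{i\in\omega}D_i$ into infinite pieces such that (C)~each $D_i\in\mathcal{F}^*$ and (B)~every $S\subseteq I$ with $S\cap D_i$ finite for all $i$ lies in $\mathcal{F}^*$. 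Indeed, enumerating $D_i=\{d_i^j:j\in\omega\}$ and setting $\sigma(i,j)=d_i^j$ gives a bijection $\omega\times\omega\to I$, and the $\sigma$-image of a basic set $\{(i,j):i\ge k,\ j\ge f(i)\}$ of $\mathcal{N}_2$ has complement equal to the finite union of columns $\bigcup_{i<k}D_i$ together with the column-finite set $\{d_i^j:i\ge k,\ j<f(i)\}$; by (C) and (B) this complement lies in the ideal $\mathcal{F}^*$, so the image lies in $\mathcal{F}$. Since such sets generate $\mathcal{N}_2$ and $\mathcal{F}$ is upward closed, this yields $\sigma(\mathcal{N}_2)\subseteq\mathcal{F}$, i.e. $\mathcal{N}_2\sqsubseteq\mathcal{F}$.

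To construct the partition I would run $\tau$ against Player~II while simultaneously carving out the columns. Because every offered set $C=\tau(\cdots)$ is infinite, Player~II can always feed in fresh, previously unused points of $C$ as new grid entries while keeping the play legal. Building an $\omega$-branching tree of such legal positions, I would let the points played become the entries $d_i^j$, arranging the bookkeeping so that the resulting pieces $D_i$ are infinite, pairwise disjoint and exhaust $I$. Condition (C) is then immediate: each column is contained in the union of Player~II's moves along a single branch, which lies in $\mathcal{F}^*$ because $\tau$ is winning. Condition (B) is obtained by ensuring that every column-finite set is dominated by the union of Player~II's moves along some branch, whence it too lies in $\mathcal{F}^*$.

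The main obstacle is exactly this last point. The grid entries are fixed once and for all, yet each must happen to lie in $\tau$'s response at the moment it is played, and this has to hold simultaneously along all the branches that use it. Since there are continuum-many column-finite sets to dominate but only countably many grid points, the recursion cannot fix the columns first and cover the targets afterwards — a response of $\tau$ could simply avoid a prescribed column-finite set. Instead the enumeration of targets must be interleaved with the construction of the columns, repeatedly exploiting the infinitude of each $\tau$-response to choose points that stay legal on every relevant branch. Carrying out this fusion — so that the pieces are genuinely infinite, disjoint, cover $I$, and swallow every column-finite set — while invoking the winning property of $\tau$ on each branch, is the technical heart of the argument.
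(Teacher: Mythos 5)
The paper itself contains no proof of this lemma to compare against: it is quoted with attributions to Laflamme \cite{Laflamme} and Laczkovich--Rec{\l}aw \cite{Reclaw}, so your attempt has to be judged on its own merits. Your first implication is correct and complete: playing $C_n=\sigma(\{(i,j):i\ge n\})$ is legal, forces $\sigma^{-1}(\bigcup_n F_n)$ to have all columns finite, and the bijectivity of $\sigma$ turns membership of this set in $\mathcal{N}_2^*$ into $\bigcup_n F_n\in\mathcal{F}^*$. Your reduction of the converse to producing a partition $I=\bigsqcup_i D_i$ into infinite pieces with properties (C) and (B) is also correct (and the observation that a winning Player~I forces $\mathcal{F}$ to be free, hence to contain the Fr\'echet filter, is needed and correctly made).

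The problem is that the converse is where the entire content of the lemma lies, and you never prove it: your last two paragraphs describe a tree-of-runs scheme and then explicitly concede that carrying out the fusion is ``the technical heart of the argument.'' That heart is precisely what is missing, and it is not routine bookkeeping. The obstacle you name is real --- a point can be placed in a column only if it lies in $\tau$'s response at the very node where some branch must play it, and $\tau$'s later responses need not contain points fixed earlier --- but your sketch neither resolves it (e.g., by choosing new column points inside the intersection of the responses at the finitely many nodes created so far, which lies in $\mathcal{F}$ since filters are closed under finite intersections, and then letting the children of a node enumerate all finite sets of points created after that node) nor addresses a second difficulty hiding in the word ``exhaust'': since $\sqsubseteq$ demands a bijection onto all of ${\rm dom}(\mathcal{F})$, every point of $I$ must land in some column, yet only points that some branch can legally play are certified by the winning condition. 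Whatever part of $I$ escapes the branch system must be shown to lie in $\mathcal{F}^*$, for otherwise one can distribute it finitely across the columns and obtain a column-finite set violating (B); nothing in your outline guarantees this. As written, the converse direction is a plan with an honestly acknowledged hole, not a proof; to complete it you would either have to carry out the fusion construction in full or simply cite \cite[Thm. 2.16]{Laflamme} and \cite{Reclaw}, as the paper does.
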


\begin{lem}
\label{Lem3}
If $\mathcal{F}$ is a Borel filter, then $G(\mathcal{F})$ is a determined game, i.e., one of the players has a winning strategy.
\end{lem}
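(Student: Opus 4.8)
The plan is to realise $G(\mathcal F)$ as a Gale--Stewart game and appeal to the Borel Determinacy Theorem, the only point requiring care being that Player I's moves range over the uncountable set $\mathcal F$. Accordingly, I would take the alphabet of the game to be $X=\mathcal F\cup Fin$, a run being a sequence $(C_0,F_0,C_1,F_1,\dots)$ in which Player I contributes the entries $C_n\in\mathcal F$ and Player II the entries $F_n\in Fin$; the space of runs is $X^\omega$ with the product of the discrete topologies on $X$. The rules are encoded into the payoff in the usual way: let $A\subseteq X^\omega$ be the set of runs in which either Player II makes the first illegal move (some $F_n\not\subseteq C_n$), or else $F_n\subseteq C_n$ for all $n$ and $\bigcup_{n\in\omega}F_n\in\mathcal F^*$. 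Then $A$ is exactly the set of runs won by Player I, and since Player II always has the legal move $F_n=\emptyset$, proving that $A$ is Borel and that one player has a winning strategy is the same as proving the Lemma.

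The key step is to verify that $A$ is Borel in $X^\omega$. The legality condition ``$F_n\subseteq C_n$'' depends only on the two coordinates $C_n$ and $F_n$, each from a discrete space, so it is clopen; hence the set of runs in which Player II plays illegally is open, and its complement (all moves legal) is closed. It remains to treat the winning condition $\bigcup_n F_n\in\mathcal F^*$. The map $(C_n,F_n)_n\mapsto\bigcup_{n\in\omega}F_n\in 2^I$ is Borel, since for each $k\in I$ one has $k\in\bigcup_n F_n$ iff $k\in F_n$ for some $n$, an open condition depending on single (discrete) coordinates. Finally, $\mathcal F$ is Borel by hypothesis, and because the canonical involution $A\mapsto I\setminus A$ is a homeomorphism of $2^I$ (as recalled above), the dual ideal $\mathcal F^*$ is Borel as well. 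Pulling $\mathcal F^*$ back along the Borel map above yields a Borel set, and intersecting it with the closed ``all legal'' set and taking the union with the open ``Player II illegal'' set shows that $A$ is Borel.

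With $A$ Borel, the Lemma follows from the Borel Determinacy Theorem. The one subtlety I would emphasise is that Player I's alphabet $\mathcal F$ is uncountable, so the integer-game form of the theorem does not literally apply; instead I would invoke the version of Martin's theorem valid for games on an arbitrary nonempty move set, whose proof (unravelling a Borel payoff to a clopen one on auxiliary trees) is insensitive to the cardinality of the alphabet. This is the main obstacle, and it is essentially the only one: all the uncountable coordinates $C_n$ enter the payoff $A$ solely through the clopen legality requirements $F_n\subseteq C_n$, while the genuinely Borel part of $A$ involves only the countable coordinates $F_n$. Determinacy of $A$ then provides a winning strategy either for Player I (witnessing $A$) or for Player II (witnessing $X^\omega\setminus A$), which are precisely the two alternatives asserted.
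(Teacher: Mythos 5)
The paper never proves this lemma itself: it is imported from \cite{Laflamme} and \cite{Reclaw}, so your argument has to be measured against the standard proof in those sources, and your route is exactly that one --- realize $G(\mathcal{F})$ as a game with Borel payoff and invoke Martin's Borel determinacy theorem in its general form, valid for games on an arbitrary (possibly uncountable) move set, equivalently on an arbitrary pruned tree. Your handling of the two genuine issues is correct: the payoff is Borel because the legality clauses are clopen coordinate-wise (the alphabet being discrete), the map $(C_n,F_n)_{n\in\omega}\mapsto\bigcup_{n}F_n$ into $2^I$ is Borel since each condition ``$k\in\bigcup_n F_n$'' is a countable union of clopen sets, and $\mathcal{F}^*$ is Borel because complementation is a homeomorphism of $2^I$; and you rightly flag that the integer-alphabet form of Borel determinacy does not suffice, the arbitrary-alphabet (tree) form being what is actually needed.

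There is, however, a concrete defect in your encoding of the rules, and as literally written the reduction fails. Your payoff $A$ penalizes only Player II, and only for the violation $F_n\not\subseteq C_n$; nothing forces Player I's entries to lie in $\mathcal{F}$, nor Player II's entries to lie in $Fin$. On the alphabet $X=\mathcal{F}\cup Fin$ this is fatal: Player I may play $C_n=\emptyset$ at every turn --- a letter of the alphabet, but an illegal move of $G(\mathcal{F})$ when $\mathcal{F}$ is proper --- and then every run lies in $A$ (either some $F_n\not\subseteq\emptyset$, or all $F_n=\emptyset$ and $\bigcup_n F_n=\emptyset\in\mathcal{F}^*$), so Player I trivially wins the coded game with a strategy that is worthless for $G(\mathcal{F})$; symmetrically, against legal play of Player I, Player II could answer $F_n=C_n\in\mathcal{F}$, an infinite set your clause does not forbid, which puts $\bigcup_n F_n$ in $\mathcal{F}$ and hence (properness again) outside $\mathcal{F}^*$. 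Thus determinacy of $A$ as defined says nothing about $G(\mathcal{F})$. The repair is routine and is what ``the usual way'' really requires: declare a move of Player I illegal if $C_n\notin\mathcal{F}$, a move of Player II illegal if $F_n\notin Fin$ or $F_n\not\subseteq C_n$, and let $A$ consist of the runs in which Player II is strictly the first to move illegally together with the all-legal runs satisfying $\bigcup_n F_n\in\mathcal{F}^*$; all added clauses are countable unions of clopen sets, so $A$ stays Borel. Cleaner still, apply Martin's theorem directly to the game on the pruned tree of legal positions (both players always have legal moves, e.g.\ $C_n=I$ and $F_n=\emptyset$, so the tree is pruned), which builds the rules into the tree and dispenses with the rules-into-payoff translation altogether.
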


Now we can proceed to give a better upper estimate of the ranks of $\mathcal{F}$-limits.

\begin{thm}
\label{ThmA}
Let $\mathcal{F}$ be a Borel filter of rank $1$ on a set $I$, $J\in\mathcal{F}$ and $\left(\mathcal{F}_i\right)_{i\in I}$ be a family of filters on a set $X$ such that for all $i\in J$ the rank of the filter $\mathcal{F}_i$ is less than or equal to $\alpha<\omega_1$. Then ${\rm rk}\left(\lim_{\mathcal{F}}\mathcal{F}_i\right)\leq\alpha+1$.
\end{thm}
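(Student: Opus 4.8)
The plan is first to extract combinatorial structure from $\mathcal{F}$ via the game $G(\mathcal{F})$, and then to turn the diagonalizing sets into a separating set of the right complexity. Since $\mathrm{rk}(\mathcal{F})=1<2$, Theorem~\ref{ThmIsomCopies} (in the case $\alpha=2$) shows that $\mathcal{F}$ contains no isomorphic copy of $\mathcal{N}_2$, so by Lemma~\ref{Lem2} Player~I has no winning strategy in $G(\mathcal{F})$. As $\mathcal{F}$ is Borel, Lemma~\ref{Lem3} makes $G(\mathcal{F})$ determined, hence Player~II has a winning strategy, and Lemma~\ref{Lem1} then produces $\mathcal{F}$-universal sets $Z_n=\{Z_n^k:k\in\omega\}$ witnessing that $\mathcal{F}$ is $\omega$-diagonalizable by $\mathcal{F}$-universal sets. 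Since $J\in\mathcal{F}$, I pass to the restriction of $\mathcal{F}$ to $J$ and assume $I=J$, so that every $\mathcal{F}_i$ has rank at most $\alpha$.

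\textbf{The separator and the complexity count.} For each $i$ I fix a set $S_i$ separating $\mathcal{F}_i$ from $\mathcal{F}_i^*$, and here it is crucial that I take $S_i\in\Pi^0_{1+\alpha}$, which is legitimate by the $\Pi^0$-formulation of the rank. Writing $T(A)=\{i\in I:A\in S_i\}$, I set $C_{n,k}=\bigcup_{i\in Z_n^k}S_i$, then $V_{n,m}=\bigcap_{k\ge m}C_{n,k}$, and finally $\mathcal{S}=\bigcup_{n,m}V_{n,m}$. The reason for using $\Pi^0_{1+\alpha}$ separators rather than $\Sigma^0_{1+\alpha}$ ones is exactly the bookkeeping: a finite union of $\Pi^0_{1+\alpha}$ sets is $\Pi^0_{1+\alpha}$, a countable intersection of $\Pi^0_{1+\alpha}$ sets is again $\Pi^0_{1+\alpha}$, and only the outermost countable union raises the class, so $\mathcal{S}\in\Sigma^0_{1+(\alpha+1)}$. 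This single jump is precisely the level saved over the naive bound of Proposition~\ref{Prop1}(b).

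\textbf{Separation, easy direction.} It remains to show that $\mathcal{S}$ separates $\lim_{\mathcal{F}}\mathcal{F}_i$ from its dual ideal. A short computation using $\mathcal{F}_i\subset S_i$ and $\mathcal{F}_i^*\cap S_i=\emptyset$ gives $T(A)\in\mathcal{F}$ whenever $A\in\lim_{\mathcal{F}}\mathcal{F}_i$ and $T(A)\in\mathcal{F}^*$ whenever $A\in(\lim_{\mathcal{F}}\mathcal{F}_i)^*$, so the whole problem reduces to separating $\mathcal{F}$ from $\mathcal{F}^*$ through the profiles $T(A)$. If $A\in\lim_{\mathcal{F}}\mathcal{F}_i$ then $T(A)\in\mathcal{F}$, and the diagonalization property supplies an $n$ with $Z_n^k\cap T(A)\neq\emptyset$ for all large $k$, i.e. $A\in V_{n,m}\subset\mathcal{S}$ for a suitable $m$; this inclusion is immediate.

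\textbf{Separation, hard direction (the main obstacle).} The delicate step, which I expect to be the crux, is to verify $A\notin\mathcal{S}$ for $A\in(\lim_{\mathcal{F}}\mathcal{F}_i)^*$. Here $N=I\setminus T(A)\in\mathcal{F}$, and I must exclude the possibility that some family $Z_n$ meets $T(A)$ for all but finitely many $k$: plain $\mathcal{F}$-universality only guarantees a single member $Z_n^k\subset N$, leaving finitely many exceptional members that could lie inside $T(A)$ and wrongly place $A$ in some $V_{n,m}$. To close this gap I need the diagonalizing families to be \emph{strongly} $\mathcal{F}$-universal, meaning that every $N\in\mathcal{F}$ contains $Z_n^k$ for infinitely many $k$, for each $n$; then for $A$ as above infinitely many $k$ give $Z_n^k\subset N$, hence $Z_n^k\cap T(A)=\emptyset$, so $A\notin V_{n,m}$ for all $n$ and $m$. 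I expect to obtain this strengthening by re-examining Player~II's winning strategy from Lemma~\ref{Lem1}: since Player~I may play one fixed $C\in\mathcal{F}$ at every move, Player~II is forced to respond with infinitely many nonempty finite subsets of $C$ (else $\bigcup_n F_n\in\mathcal{F}^*$ and Player~I wins), and these responses yield infinitely many family members inside each $C$. Confirming that this strengthening can be imposed while preserving the diagonalization property is the heart of the argument.
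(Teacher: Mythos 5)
Your construction is, step for step, the paper's own proof: the same use of determinacy (Lemma~\ref{Lem3}), of Lemma~\ref{Lem2} together with Theorem~\ref{ThmIsomCopies} to conclude that Player~II wins, and of Lemma~\ref{Lem1} to extract the diagonalizing family $\left\{Z_n^k\right\}$; the same separating set $\mathcal{S}=\bigcup_{n,m}\bigcap_{k\geq m}\bigcup_{i\in Z_n^k}S_i$ built from $\Pi^0_{1+\alpha}$ separators; the same complexity count; and the same easy direction. (Your reduction to $I=J$ is harmless, since the restricted filter is still Borel, free and of rank $1$.) You also correctly isolate the one nontrivial point: for $A\in\left(\lim_{\mathcal{F}}\mathcal{F}_i\right)^*$ you need, for every $n$, \emph{infinitely many} $k$ with $Z_n^k\cap T(A)=\emptyset$, whereas universality as stated hands you only one such $k$ per $n$.

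At exactly this point, however, your text stops being a proof: you say you ``expect'' to obtain the strengthened universality by re-examining Player~II's strategy, and that confirming it ``is the heart of the argument.'' That heart is missing, so there is a genuine gap; moreover, the route you propose is needlessly roundabout. No return to the game is required, because the strengthening is automatic for \emph{any} $\mathcal{F}$-universal family once ${\rm rk}(\mathcal{F})=1$: rank at least $1$ means $\mathcal{F}$ is free, hence $\mathcal{F}\supset\mathcal{F}_{Fr}$, hence $\mathcal{F}$ is closed under removing finite sets. Now fix $n$ and $N\in\mathcal{F}$, and let $K=\left\{k\in\omega:Z_n^k\subset N\right\}$. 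If $K$ were finite, then $N'=N\setminus\bigcup_{k\in K}Z_n^k$ would still belong to $\mathcal{F}$ (only finitely many points are removed, each $Z_n^k$ being finite), so universality would give some $k'$ with $Z_n^{k'}\subset N'$; then $Z_n^{k'}\subset N$ forces $k'\in K$, whence $Z_n^{k'}\subset N'$ is disjoint from $Z_n^{k'}$ itself, i.e. $Z_n^{k'}=\emptyset$, contradicting $Z_n^{k'}\in Fin\setminus\left\{\emptyset\right\}$. So $K$ is infinite, which is precisely your ``strong universality.'' This two-line observation is exactly how the paper closes the argument (``Since $\mathcal{F}$ has rank $1$ (contains the Fr\'echet filter) and the sets $Z_n^k$ are finite\dots''), and with it your proposal becomes a complete proof.
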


\begin{proof}
Let $S_i\in\Pi^0_{1+\alpha}(X)$ be sets separating the filters $\mathcal{F}_i$ from their dual ideals.
Since $\mathcal{F}$ is a Borel filter of rank $1$, by Lemma \ref{Lem3} the game $G(\mathcal{F})$ is determined, and by Lemma \ref{Lem2} together with Theorem \ref{ThmIsomCopies} Player II has a winning strategy. Then, by Lemma \ref{Lem1} there is a family $\left\{Z_n^k:k\in\omega\right\}_{n\in\omega}$ of $\mathcal{F}$-universal sets $\omega$-diagonalizing $\mathcal{F}$.
Let
$$S=\bigcup_{n\in\omega}\bigcup_{m\in\omega}\bigcap_{k>m}\bigcup_{i\in Z_n^k\cap J}S_i.$$
This set is of additive class $1+\alpha+1$.\\
Moreover, if $A\in\lim_{\mathcal{F}}\mathcal{F}_i$, then the set $\left\{i\in I:A\in S_i\right\}\cap J$ is an element of $\mathcal{F}$. So, there is $n\in\omega$ such that $Z_n^k\cap\left\{i\in I:A\in S_i\right\}\cap J\neq\emptyset$ for all but finitely many $k\in\omega$. Hence
$$\exists_{n\in\omega}\exists_{m\in\omega}\forall_{k>m}\exists_{i\in Z_n^k\cap J}A\in S_i$$
and $A\in S$.\\
Conversely, if $A\in\left(\lim_{\mathcal{F}}\mathcal{F}_i\right)^*$, then the set $\left\{i\in I:A\notin S_i\right\}$ is an element of $\mathcal{F}$. So, for every $n\in\omega$ there is $k\in\omega$ such that $Z_n^k\cap J\subset Z_n^k\subset\left\{i\in I:A\notin S_i\right\}$. Since $\mathcal{F}$ has rank $1$ (contains the Fr\'echet filter) and the sets $Z_n^k$ are finite, we have an even stronger claim: for every $n\in\omega$ there are infinitely many $k\in\omega$ such that $Z_n^k\cap J\subset\left\{i\in I:A\notin S_i\right\}$. Hence
$$\forall_{n\in\omega}\forall_{m\in\omega}\exists_{k>m}\forall_{i\in Z_n^k\cap J}A\notin S_i$$
and $A\notin S$.
\end{proof}

Besides giving an exact outcome for the ranks of some Fubini sums, the next Corollary shows that Theorem \ref{ThmA} is the best upper estimate for the ranks of $\mathcal{F}$-limits for $\mathcal{F}$ a Borel filter of rank $1$ that we can get.

\begin{cor}
\label{CorFubini}
Let $\mathcal{F}$ be a Borel filter of rank $1$ on a set $I$ and $\mathcal{F}_i$ for $i\in I$ be filters on $D_i$, respectively. Let also $\mathcal{G}=\mathcal{F}-\sum_{i\in I}\mathcal{F}_i$ and $J\in\mathcal{F}$. If ${\rm rk}(\mathcal{F}_i)=\alpha$ for all $i\in J$, then ${\rm rk}(\mathcal{G})=\alpha+1$.
\end{cor}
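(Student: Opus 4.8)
The plan is to establish ${\rm rk}(\mathcal{G})=\alpha+1$ by proving the two inequalities separately, with the lower bound being immediate and the upper bound carrying the content. For ${\rm rk}(\mathcal{G})\geq\alpha+1$ I would simply invoke Theorem \ref{ThmDebsA}(a): since $\mathcal{F}$ has rank $1$, so ${\rm rk}(\mathcal{F})\geq 1$, and ${\rm rk}(\mathcal{F}_i)=\alpha\geq\alpha$ for all $i\in J$, the theorem applied with outer parameter $1$ and inner parameter $\alpha$ yields ${\rm rk}(\mathcal{G})\geq\alpha+1$ at once. All the remaining effort goes into the matching upper estimate.

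For the upper bound the idea is to realize $\mathcal{G}$ as an $\mathcal{F}$-limit and then apply Theorem \ref{ThmA}, whose conclusion ${\rm rk}\leq\alpha+1$ is sharper than the bound ${\rm rk}\leq\alpha+1+1$ that Theorem \ref{ThmDebsA}(b) would give. Writing $X=\sum_{i\in I}D_i$ and denoting by $A^{(i)}=\left\{x\in D_i:(i,x)\in A\right\}$ the $i$-th slice of a set $A\subset X$, I would introduce the filter $\widetilde{\mathcal{F}}_i=\left\{A\subset X: A^{(i)}\in\mathcal{F}_i\right\}$ on $X$; that this is a filter is routine, since slicing commutes with finite intersections and preserves inclusions. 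Unravelling the definition of the basis of the Fubini sum then shows that $A\in\mathcal{G}$ holds exactly when $\left\{i\in I: A^{(i)}\in\mathcal{F}_i\right\}\in\mathcal{F}$, that is, $\mathcal{G}=\lim_{\mathcal{F}}\widetilde{\mathcal{F}}_i$.

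To be entitled to apply Theorem \ref{ThmA} I still need ${\rm rk}(\widetilde{\mathcal{F}}_i)\leq\alpha$ for $i\in J$. Here I would use the slice embedding $\sigma_i:D_i\rightarrow X$ given by $\sigma_i(x)=(i,x)$: its domain $D_i$ lies in $\mathcal{F}_i$, and since $\sigma_i^{-1}[A]=A^{(i)}$, we get $\sigma_i^{-1}[A]\in\mathcal{F}_i$ for every $A\in\widetilde{\mathcal{F}}_i$. Thus $\sigma_i$ is a quasi-homomorphism from $\mathcal{F}_i$ to $\widetilde{\mathcal{F}}_i$, and by the quasi-homomorphism property of the rank we obtain ${\rm rk}(\widetilde{\mathcal{F}}_i)\leq{\rm rk}(\mathcal{F}_i)=\alpha$. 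Theorem \ref{ThmA} now applies to $\mathcal{F}$ (a Borel filter of rank $1$) and the family $(\widetilde{\mathcal{F}}_i)_{i\in I}$, giving ${\rm rk}(\mathcal{G})={\rm rk}\left(\lim_{\mathcal{F}}\widetilde{\mathcal{F}}_i\right)\leq\alpha+1$. Combined with the lower bound, this yields the asserted equality.

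I expect no genuine difficulty, as Theorems \ref{ThmA} and \ref{ThmDebsA} do the substantive work; the only point demanding care is the translation between the two set-ups. The filters $\mathcal{F}_i$ of a Fubini sum live on pairwise different ground sets $D_i$, whereas Theorem \ref{ThmA} is stated for a family of filters on one common set $X$. The crux is therefore to repackage the Fubini sum as an $\mathcal{F}$-limit on $X=\sum_{i\in I}D_i$ through the slice maps, and to confirm via the quasi-homomorphism above that this repackaging cannot raise the relevant ranks (which is all that the upper bound requires).
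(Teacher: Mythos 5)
Your proposal is correct and follows essentially the same route as the paper: the lower bound via Theorem \ref{ThmDebsA}(a), and the upper bound by representing the Fubini sum as $\lim_{\mathcal{F}}\widetilde{\mathcal{F}}_i$ with the slice filters $\widetilde{\mathcal{F}}_i$ and applying Theorem \ref{ThmA}. The only difference is that where the paper asserts ${\rm rk}(\widetilde{\mathcal{F}}_i)={\rm rk}(\mathcal{F}_i)$ as ``easy to see,'' you supply the needed inequality ${\rm rk}(\widetilde{\mathcal{F}}_i)\leq{\rm rk}(\mathcal{F}_i)$ explicitly via the slice-embedding quasi-homomorphism, which is a correct and welcome filling-in of that detail.
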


\begin{proof}
According to Prop. 4.4 from \cite{Debs} ${\rm rk}(\mathcal{G})\geq\alpha+1$. Notice that the $\mathcal{F}$-Fubini sum of the family $\left(\mathcal{F}_i\right)_{i\in I}$ can be represented as an $\mathcal{F}$-limit of the family of filters $\left(\tilde{\mathcal{F}}_i\right)_{i\in I}$, defined by
$$\tilde{\mathcal{F}}_j=\left\{M\subset\sum_{i\in I}D_i : \left\{x\in D_j:(j,x)\in M\right\}\in\mathcal{F}_j\right\}.$$
It is easy to see that ${\rm rk}(\tilde{\mathcal{F}}_i)={\rm rk}(\mathcal{F}_i)$,for each $i\in I$. Then Theorem \ref{ThmA} implies that ${\rm rk}(\mathcal{G})\leq\alpha+1$, which finishes the proof.
\end{proof}

Immediately from induction based on Theorem \ref{ThmA}, we obtain also the following Corollary concerning filters of countable type:

\begin{cor}
If $n\in\omega$ and $\mathcal{F}\in\mathfrak{F}_n$, then ${\rm rk}(\mathcal{F})\leq n$.
\end{cor}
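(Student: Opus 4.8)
The plan is to prove the statement
$$\text{if } n\in\omega \text{ and } \mathcal{F}\in\mathfrak{F}_n, \text{ then } {\rm rk}(\mathcal{F})\leq n$$
by induction on $n$, precisely as the sentence preceding it advertises. The base case $n=0$ is immediate: by definition $\mathfrak{F}_0$ consists of the principal ultrafilters, and a principal filter is not free, so by the third property of filter rank in the Proposition of Section 2.2 it has rank $0$. This matches the bound $n=0$.

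For the inductive step, assume the claim holds for all indices below $n$ (or just for $n-1$ when $n\geq 1$), and let $\mathcal{F}\in\mathfrak{F}_n$. By Fremlin's definition there is a sequence of filters $\mathcal{F}_i\in\bigcup_{\xi<n}\mathfrak{F}_\xi$ with $\mathcal{F}=\lim_{i\rightarrow\mathcal{F}_{Fr}}\mathcal{F}_i$, where the outer filter is the Fr\'echet filter $\mathcal{F}_{Fr}=\mathcal{N}_1$. Since each $\mathcal{F}_i$ lies in some $\mathfrak{F}_{\xi}$ with $\xi\leq n-1$, the inductive hypothesis gives ${\rm rk}(\mathcal{F}_i)\leq n-1$ for every $i$. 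I would then apply Theorem~\ref{ThmA} with $\mathcal{F}_{Fr}$ in the role of the outer Borel filter, $\alpha=n-1$, and $J=I$ (the full index set, which certainly belongs to $\mathcal{F}_{Fr}$). Theorem~\ref{ThmA} requires that the outer filter be Borel of rank $1$: the Fr\'echet filter is $\Pi^0_2$ hence Borel, and by Theorem~\ref{ThmDebsB} (applied to $\mathcal{N}_1=\mathcal{F}_{Fr}$) it has rank exactly $1$. Thus all hypotheses of Theorem~\ref{ThmA} are met, and it yields
$${\rm rk}(\mathcal{F})={\rm rk}\left(\lim_{\mathcal{F}_{Fr}}\mathcal{F}_i\right)\leq\alpha+1=(n-1)+1=n,$$
completing the induction.

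I do not expect a genuine obstacle here, since this is a direct specialization of Theorem~\ref{ThmA}; the only points requiring minor care are administrative rather than mathematical. One must verify that the Fr\'echet filter genuinely satisfies the standing hypotheses of Theorem~\ref{ThmA}, namely that it is \emph{Borel} and of \emph{rank exactly} $1$ (both recorded above), so that the determinacy and isomorphic-copy arguments invoked in that theorem apply. One should also confirm the arithmetic handles the degenerate small cases correctly: for $n=1$ the hypothesis gives filters of rank $\leq 0$ and the conclusion reads ${\rm rk}(\mathcal{F})\leq 1$, consistent with $\mathfrak{F}_1$ consisting of $\mathcal{F}_{Fr}$-limits of principal ultrafilters. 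Since Fremlin's induction allows the $\mathcal{F}_i$ to range over $\bigcup_{\xi<n}\mathfrak{F}_\xi$ rather than only $\mathfrak{F}_{n-1}$, the uniform bound ${\rm rk}(\mathcal{F}_i)\leq n-1$ still follows from the inductive hypothesis applied at each $\xi$, so no strengthening of the induction is needed.
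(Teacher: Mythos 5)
Your proof is correct and is exactly the argument the paper intends: the paper derives this corollary ``immediately from induction based on Theorem~\ref{ThmA}'', and your induction (base case via non-free principal ultrafilters having rank $0$, inductive step applying Theorem~\ref{ThmA} with the Fr\'echet filter as the outer Borel filter of rank $1$ and $\alpha=n-1$) fills in precisely those details.
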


\section{The lower estimate of ranks of limits of filter sequences}

\begin{prop}
\label{PropBottomEst}
Let $\mathcal{F}$ be a filter on $I$ and $\left(\mathcal{F}_i\right)_{i\in I}$ be a family of filters on $X$. If there is $J\in\mathcal{F}$ such that ${\rm rk}\left(\mathcal{F}_i\right)\geq 1$ for each $i\in J$, then ${\rm rk}\left(\lim_\mathcal{F} \mathcal{F}_i\right)\geq 1$.
\end{prop}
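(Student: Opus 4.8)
The plan is to reduce the statement to the elementary fact recorded in the Proposition of Section~2, namely that a filter has rank at least $1$ exactly when it is free. Thus it suffices to show that the hypothesis forces $\lim_\mathcal{F}\mathcal{F}_i$ to be free, i.e. $\bigcap\lim_\mathcal{F}\mathcal{F}_i=\emptyset$. In fact I would prove the slightly more quotable statement that $\lim_\mathcal{F}\mathcal{F}_i$ contains the Fr\'echet filter $\mathcal{F}_{Fr}$, which is the cleanest route to freeness.

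The first step is the auxiliary observation that a filter $\mathcal{G}$ on $X$ is free if and only if it contains $\mathcal{F}_{Fr}$. The nontrivial direction runs as follows: if $\mathcal{G}$ is free and $x\in X$, then $x\notin\bigcap\mathcal{G}$, so there is some $B\in\mathcal{G}$ with $x\notin B$; since $B\subseteq X\setminus\{x\}$ and $\mathcal{G}$ is closed under supersets, we get $X\setminus\{x\}\in\mathcal{G}$. Intersecting finitely many such sets and using closure under finite intersection shows every cofinite subset of $X$ belongs to $\mathcal{G}$, i.e. $\mathcal{F}_{Fr}\subseteq\mathcal{G}$. The converse is immediate, as every $X\setminus\{x\}$ is cofinite.

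Applying this to each $\mathcal{F}_i$ with $i\in J$, which is free because ${\rm rk}(\mathcal{F}_i)\geq 1$, I conclude that every cofinite $C\subseteq X$ lies in $\mathcal{F}_i$ for all $i\in J$. Hence $\{i\in I:C\in\mathcal{F}_i\}\supseteq J$, and since $J\in\mathcal{F}$ and $\mathcal{F}$ is closed under supersets, this set belongs to $\mathcal{F}$; by the definition of the $\mathcal{F}$-limit this means $C\in\lim_\mathcal{F}\mathcal{F}_i$. Therefore $\mathcal{F}_{Fr}\subseteq\lim_\mathcal{F}\mathcal{F}_i$, so $\lim_\mathcal{F}\mathcal{F}_i$ is free and hence ${\rm rk}\left(\lim_\mathcal{F}\mathcal{F}_i\right)\geq 1$. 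Alternatively, one may finish by invoking monotonicity of rank under inclusion together with ${\rm rk}(\mathcal{F}_{Fr})={\rm rk}(\mathcal{N}_1)=1$ from Theorem~\ref{ThmDebsB}.

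This argument is entirely routine and I expect no genuine obstacle: the statement is essentially the base case of the inductive picture developed for the higher ranks. The only point that deserves a moment's care is the equivalence ``free $\Leftrightarrow$ contains $\mathcal{F}_{Fr}$'', which should not be taken for granted but follows at once from superset-closure as above.
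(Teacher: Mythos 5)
Your proof is correct and takes essentially the same route as the paper's: rank $\geq 1$ makes each $\mathcal{F}_i$ ($i\in J$) free, hence it contains the Fr\'echet filter, and since $J\in\mathcal{F}$ this transfers to $\lim_\mathcal{F}\mathcal{F}_i$, which is therefore free and of positive rank. The paper's argument is just a terser version of yours, taking the equivalence ``free $\Leftrightarrow$ contains $\mathcal{F}_{Fr}$'' for granted where you spell it out.
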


\begin{proof}
Since ${\rm rk}\left(\mathcal{F}_i\right)\geq 1$ for $i\in J$, these filters are free and therefore contain the Fr\'echet filter. Since $J\in\mathcal{F}$ the filter $\lim_\mathcal{F} \mathcal{F}_i$ also contains the Fr\'echet filter, and so has positive rank.
\end{proof}

\begin{rem}
The family of Katětov filters defined in Subsection \ref{SubsectionKatetov} can be extended to $\omega_1$. We omit the definition of filters $\mathcal{N}_\gamma$ for $\omega\leq\gamma<\omega_1$; however, it can be found in \cite{Debs}. Theorem \ref{ThmDebsB} can also be generalized: for every $\alpha<\omega_1$, the filter $\mathcal{N}_\alpha$ has rank $\alpha$ (cf. \cite[Thm. 6.5]{Debs}). 
\end{rem}

Now we can proceed to show that the estimate from Proposition \ref{PropBottomEst} is the best of this kind that we are able to get.

\begin{lem}
\label{LemRank2}
Filters $\mathcal{N}_\gamma$ for $1\leq\gamma<\omega_1$ have the following property: there is an infinite family of pairwise disjoint infinite sets $\left(Z_i\right)_{i\in\omega}$ such that for any subset $M\subset{\rm dom}\left(\mathcal{N}_\gamma\right)$, if $Z_i\setminus M$ is infinite for all $i\in\omega$, then $M\notin\mathcal{N}_\gamma$.
\end{lem}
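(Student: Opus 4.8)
My plan is to argue by transfinite induction on $\gamma$, exploiting the fact that for every $\gamma>1$ the filter $\mathcal{N}_\gamma$ is, by its definition, an $\mathcal{F}_{Fr}$-Fubini sum $\mathcal{F}_{Fr}-\sum_{n\in\omega}\mathcal{N}_{\delta_n}$, where the $\delta_n$ satisfy $1\le\delta_n<\gamma$: one takes $\delta_n=\alpha$ for all $n$ when $\gamma=\alpha+1$ (since $\mathcal{N}_{\alpha+1}=\mathcal{F}_{Fr}\times\mathcal{N}_\alpha$ is the product, i.e.\ the Fubini sum with constant summand), and an increasing sequence $\delta_n\nearrow\gamma$ with $\delta_n\ge1$ when $\gamma$ is a limit. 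The only fact about this sum I will use is the slicewise membership criterion: writing $M^{(n)}=\left\{x:(n,x)\in M\right\}$ for the $n$th slice, one has $M\in\mathcal{N}_\gamma$ if and only if $\left\{n:M^{(n)}\in\mathcal{N}_{\delta_n}\right\}$ is cofinite. This is immediate from the description of a basis of a Fubini sum, because $M$ contains a basic set $\sum_{n\in G}F_n$ (with $G$ cofinite and $F_n\in\mathcal{N}_{\delta_n}$) exactly when $M^{(n)}\in\mathcal{N}_{\delta_n}$ for cofinitely many $n$.

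For the base case $\gamma=1$ I have $\mathcal{N}_1=\mathcal{F}_{Fr}$, and I claim any partition of $\omega$ into infinitely many infinite sets $(Z_i)_{i\in\omega}$ works: if $Z_i\setminus M$ is infinite for every $i$, then in particular $\omega\setminus M\supset Z_0\setminus M$ is infinite, so $M$ is not cofinite and hence $M\notin\mathcal{N}_1$.

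For the inductive step I fix $\gamma>1$ together with the representation $\mathcal{N}_\gamma=\mathcal{F}_{Fr}-\sum_{n\in\omega}\mathcal{N}_{\delta_n}$, $1\le\delta_n<\gamma$. By the inductive hypothesis, for each $n$ there is a family $(W^n_i)_{i\in\omega}$ of pairwise disjoint infinite subsets of ${\rm dom}(\mathcal{N}_{\delta_n})$ witnessing the property for $\mathcal{N}_{\delta_n}$. I will transport each family into the $n$th summand by setting $Z^n_i=\{n\}\times W^n_i\subset{\rm dom}(\mathcal{N}_\gamma)$, and re-enumerate the doubly-indexed collection $\left\{Z^n_i:n,i\in\omega\right\}$ as a single sequence $(Z_j)_{j\in\omega}$ via a bijection $\omega\leftrightarrow\omega\times\omega$. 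These sets are infinite and pairwise disjoint: members of different summands live in disjoint pieces of the domain, and within one summand the $W^n_i$ are already disjoint. To verify the property, suppose $Z^n_i\setminus M$ is infinite for all $n,i$. Fixing $n$ and noting $Z^n_i\setminus M=\{n\}\times\left(W^n_i\setminus M^{(n)}\right)$, I get that $W^n_i\setminus M^{(n)}$ is infinite for every $i$; the inductive hypothesis applied to $\mathcal{N}_{\delta_n}$ then yields $M^{(n)}\notin\mathcal{N}_{\delta_n}$. Since this holds for every $n$, the set $\left\{n:M^{(n)}\in\mathcal{N}_{\delta_n}\right\}$ is empty, in particular not cofinite, so by the slicewise criterion $M\notin\mathcal{N}_\gamma$.

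The one genuinely delicate point will be the bookkeeping around the omitted definition of $\mathcal{N}_\gamma$ for limit $\gamma$: I must confirm that this definition indeed presents $\mathcal{N}_\gamma$ as an $\mathcal{F}_{Fr}$-sum of strictly lower Kat\'etov filters and that the approximating ordinals can be chosen with $\delta_n\ge1$, so that the recursion never descends to $\mathcal{N}_0$, whose one-point domain contains no infinite sets; this is precisely why $\gamma=1$, rather than $\gamma=0$, must serve as the base case. Everything else — the slice computation and the transport of the witnessing families — is routine once the Fubini-sum membership criterion is in hand.
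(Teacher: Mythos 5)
Your proof is correct, and it is essentially the argument the paper has in mind: the paper omits the proof entirely, calling the lemma ``a simple conclusion based on the definition of filters $\mathcal{N}_\gamma$,'' and your transfinite induction on the Fubini-sum presentation (base case $\mathcal{N}_1\cong\mathcal{F}_{Fr}$, inductive step via the slicewise membership criterion for $\mathcal{F}_{Fr}-\sum_{n}\mathcal{N}_{\delta_n}$) is exactly that conclusion written out. The one point you flag as delicate is indeed fine: in Debs--Saint Raymond the filters $\mathcal{N}_\gamma$ for limit $\gamma$ are defined as $\mathcal{F}_{Fr}$-Fubini sums of $\mathcal{N}_{\delta_n}$ with $\delta_n$ cofinal in $\gamma$, and even if finitely many $\delta_n$ were $0$ your argument survives, since then $\left\{n: M^{(n)}\in\mathcal{N}_{\delta_n}\right\}$ is still contained in a finite set, hence not cofinite.
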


The above Lemma is a simple conclusion based on the definition of filters $\mathcal{N}_\gamma$; therefore we omit the proof.

\begin{lem}
\label{LemRank1}
For every positive ordinal number $\alpha<\omega_1$, there exist two filters $\mathcal{G}_0$ and $\mathcal{G}_1$ of rank $\alpha$ such that $\mathcal{G}_0\cap\mathcal{G}_1$ has rank $1$.
\end{lem}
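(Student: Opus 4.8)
The plan is to make the construction deliberately \emph{asymmetric}. I would let $\mathcal{G}_0$ be the filter $\mathcal{N}_\alpha$ itself, so ${\rm rk}(\mathcal{G}_0)=\alpha$, and fix for it the disjoint infinite sets $(Z_i)_{i\in\omega}$ provided by Lemma \ref{LemRank2}. Then I would build $\mathcal{G}_1$ as an isomorphic copy of $\mathcal{N}_\alpha$ through a bijection $\pi\colon D\to D$ (where $D={\rm dom}(\mathcal{N}_\alpha)$) chosen so that $\pi[Z_i]\cap Z_j$ is infinite for all $i,j$; such a $\pi$ exists because each $Z_j$ is infinite (split each $Z_i$ into infinitely many infinite pieces and scatter them across all the $Z_j$). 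Setting $\mathcal{G}_1=\{M\subset D:\pi[M]\in\mathcal{N}_\alpha\}$, it is isomorphic to $\mathcal{N}_\alpha$ and hence also of rank $\alpha$. Since $\mathcal{N}_\alpha$, and therefore each $\mathcal{G}_j$, contains the Fr\'echet filter, $\mathcal{F}:=\mathcal{G}_0\cap\mathcal{G}_1$ is free, so ${\rm rk}(\mathcal{F})\geq1$ exactly as in Proposition \ref{PropBottomEst}. The whole problem then reduces to the upper bound ${\rm rk}(\mathcal{F})\leq1$.

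For that bound I would exhibit a $\Sigma^0_2$ set separating $\mathcal{F}$ from $\mathcal{F}^*$, namely
$$S=\bigcup_{i\in\omega}\left\{M\subset D:Z_i\subset^* M\right\}.$$
Each $\{M:Z_i\subset^* M\}$ is $\Sigma^0_2$, being the countable union over $F\in Fin$ of the closed sets $\{M:Z_i\setminus F\subseteq M\}$, so $S\in\Sigma^0_2=\Sigma^0_{1+1}$. The inclusion $\mathcal{F}\subseteq\mathcal{G}_0\subseteq S$ is the contrapositive of Lemma \ref{LemRank2}: every $M\in\mathcal{N}_\alpha$ almost contains some $Z_i$. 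The substance is that $S$ misses $\mathcal{F}^*$, and this is precisely where the choice of $\pi$ is used. The key point is the equivalence, valid for any free filter, that every member of $\mathcal{G}_1$ meets a fixed $Z_i$ infinitely if and only if $Z_i\notin\mathcal{G}_1^*$ (if some $B\in\mathcal{G}_1$ met $Z_i$ finitely then $B\subset^* Z_i^c$, forcing $Z_i^c\in\mathcal{G}_1$). Applying Lemma \ref{LemRank2} to $\mathcal{N}_\alpha$ a second time, the spreading property ``$\pi[Z_i]\cap Z_j$ infinite for all $j$'' forces $\pi[Z_i]^c\notin\mathcal{N}_\alpha$, i.e.\ $Z_i\notin\mathcal{G}_1^*$, for every $i$. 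Hence every $B\in\mathcal{G}_1$ meets every $Z_i$ infinitely; so if $A\in\mathcal{F}^*\subseteq\mathcal{G}_1^*$ then $A^c\in\mathcal{G}_1$ meets every $Z_i$ infinitely, whence $Z_i\setminus A$ is infinite for all $i$ and $A\notin S$. Thus $S$ separates $\mathcal{F}$ from $\mathcal{F}^*$ and ${\rm rk}(\mathcal{F})\leq1$.

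I expect the main obstacle to be discovering this asymmetry rather than verifying it. The symmetric-looking attempts fail: if one places two copies of $\mathcal{N}_\alpha$ on $\omega\times\omega$, one reading columns and the other rows, then Player~I wins the game $G(\mathcal{G}_0\cap\mathcal{G}_1)$ via the escaping rectangles $C_n=\{(i,j):i,j>n\}$ (any finite selections from them leave every column and every row finite, so their union lands in $\mathcal{F}^*$), so by Lemma \ref{Lem2} the intersection still contains a copy of $\mathcal{N}_2$ and has rank $\geq2$. The decisive idea is therefore that the separator must be built from the single $Z$-family of $\mathcal{G}_0$ alone, while $\mathcal{G}_1$ is asked only to render every $Z_i$ positive, so that no element of the dual ideal can almost contain a $Z_i$; Lemma \ref{LemRank2} then supplies both the inclusion $\mathcal{G}_0\subseteq S$ and, through the positivity of the sets $\pi[Z_i]$, the disjointness $S\cap\mathcal{F}^*=\emptyset$. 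The remaining steps, the existence of the spreading bijection $\pi$ and the $\Sigma^0_2$ estimate for $S$, are routine.
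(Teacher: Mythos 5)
Your proof is correct, and your construction coincides with the paper's up to isomorphism: the paper also takes two copies of $\mathcal{N}_\alpha$, transported to $\omega$ by bijections $\pi_0,\pi_1$ satisfying $\left|\pi_0^{-1}[Z_i]\cap\pi_1^{-1}[Z_j]\right|=\omega$ for all $i,j$, which is exactly your single spreading bijection after composing, $\pi=\pi_1\circ\pi_0^{-1}$. The genuine difference is how the upper bound ${\rm rk}(\mathcal{G}_0\cap\mathcal{G}_1)\leq 1$ is established. The paper observes that the intersection is analytic and invokes Theorem \ref{ThmIsomCopies}, reducing the bound to showing $\mathcal{N}_2\not\sqsubseteq\mathcal{G}_0\cap\mathcal{G}_1$; this non-embedding claim occupies the bulk of its proof, a two-case combinatorial analysis over an arbitrary bijection $\tau:\omega\rightarrow\omega^2$ (one case where some $\pi_k^{-1}[Z_{i_0}]$ is covered by finitely many columns $E_i$, and a selector argument otherwise). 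You bypass that machinery entirely by exhibiting the explicit $\Sigma^0_2$ separator $S=\bigcup_{i\in\omega}\left\{M:Z_i\subset^* M\right\}$: the inclusion $\mathcal{G}_0\cap\mathcal{G}_1\subset S$ is the contrapositive of Lemma \ref{LemRank2}, and disjointness from the dual ideal holds because the spreading property together with a second application of Lemma \ref{LemRank2} makes every $Z_i$ positive with respect to $\mathcal{G}_1$, so no element of $\mathcal{G}_1^*\supset\left(\mathcal{G}_0\cap\mathcal{G}_1\right)^*$ can almost contain any $Z_i$. All the steps check out, including the existence of $\pi$ and the complexity estimate for $S$. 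What your route buys is economy and self-containedness: it needs only Lemma \ref{LemRank2}, freeness, and the definition of rank, and avoids both the analyticity hypothesis and the imported Debs--Saint Raymond embedding characterization. What the paper's route states explicitly is the combinatorial fact that $\mathcal{N}_2$ does not embed into the intersection, though for analytic filters Theorem \ref{ThmIsomCopies} makes that equivalent to your conclusion, so nothing substantive is lost. Your aside that the symmetric row/column arrangement fails (Player I wins the game, so the intersection has rank $\geq 2$) is also correct and accurately pinpoints why the spreading condition is indispensable in either version.
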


\begin{proof}
Set $\alpha<\omega_1$ and let $\left(Z_i\right)_{i\in\omega}$ be the family from Lemma \ref{LemRank2} for the filter $\mathcal{N}_\alpha$. Let now $\pi_0,\pi_1:\omega\rightarrow{\rm dom}\left(\mathcal{N}_\alpha\right)$ be any bijections satisfying $\left|\pi_0^{-1}[Z_i]\cap\pi_1^{-1}[Z_j]\right|=\omega$, for all $i,j\in\omega$. Define filters $\mathcal{G}_0$ and $\mathcal{G}_1$ as follows:
$$M\in\mathcal{G}_k\Leftrightarrow\pi_k [M]\in\mathcal{N}_\alpha,$$
for $k\in\left\{0,1\right\}$. They are isomorphic with $\mathcal{N}_\alpha$ and therefore have rank $\alpha$. Notice that $\mathcal{G}_0\cap\mathcal{G}_1$ contains $\mathcal{N}_1$ (since filters $\mathcal{G}_0$ and $\mathcal{G}_1$ both contain it), so ${\rm rk}(\mathcal{G}_0\cap\mathcal{G}_1)>0$. \\
Observe also that for $k\in\left\{0,1\right\}$ the family $\left(\pi_k^{-1}[Z_i]\right)_{i\in\omega}$ for $\mathcal{G}_k$ has the same property as the family $\left(Z_i\right)_{i\in\omega}$ for the filter $\mathcal{N}_\alpha$.\\
The filter $\mathcal{G}_0\cap\mathcal{G}_1$ is analytic, so by using Theorem \ref{ThmIsomCopies} it suffices to show that $\mathcal{N}_2\not\sqsubseteq\mathcal{G}_0\cap\mathcal{G}_1$. Let $\tau:\omega\rightarrow\omega^2$ be any bijection and denote $E_i=\tau^{-1}\left[\left\{i\right\}\times\omega\right]$. There are two possible cases:\\
1. Suppose that there are $k\in\left\{0,1\right\}$ and $i_0\in\omega$ such that $\pi_k^{-1}[Z_{i_0}]$ is covered by finitely many $E_i$, i.e., $\pi_k^{-1}[Z_{i_0}]\subset\bigcup_{i\in T}E_i$ for some finite set $T$. We can assume that $k=1$. The set $\bigcup_{i\notin T}\left(\left\{i\right\}\times\omega\right)$ is an element of $\mathcal{N}_2$, but its preimage under $\tau$, $\bigcup_{i\notin T}E_i$, is disjoint with $\pi_1^{-1}[Z_{i_0}]$. Therefore for every $j$ we have
$$\pi_0^{-1}[Z_j]\setminus\bigcup_{i\notin T}E_i\supset\pi_0^{-1}[Z_j]\cap\pi_1^{-1}[Z_{i_0}].$$
By the definition of $\pi_0$ and $\pi_1$ the intersection $\pi_1^{-1}[Z_{i_0}]\cap \pi_0^{-1}[Z_{j}]$ is infinite. Hence $\bigcup_{i\notin T}E_i$ is not an element of $\mathcal{G}_0$.\\
2. Suppose that none of the sets $\pi_k^{-1}[Z_{j}]$ are covered by finitely many $E_i$. For each $i\in\omega$ let $S_i$ be a selector of the family
$$\left\{\pi_1^{-1}[Z_i]\cap E_j: j>i \:{\rm and}\: \pi_1^{-1}[Z_i]\cap E_j\neq\emptyset\right\}.$$
Since for each $j\in\omega$ $$\left|E_j\cap\bigcup_{i\in\omega}S_i\right|\leq j<\omega,$$
we have
$$\tau\left[\omega\setminus\bigcup_{i\in\omega}S_i\right]\in\mathcal{N}_2.$$
However, the preimage of this set under $\tau$ is not an element of $\mathcal{G}_1$, since for each $j\in\omega$
$$\left|\bigcup_{i\in\omega}S_i\cap \pi_{1}^{-1}[Z_{j}]\right|=\left|S_{j}\cap \pi_{1}^{-1}[Z_{j}]\right|=\omega.$$
Hence $\mathcal{N}_2\not\sqsubseteq\mathcal{G}_0\cap\mathcal{G}_1$.
\end{proof}

\begin{thm}
\label{ThmB}
For all ordinals $\alpha, \beta<\omega_1$ where $\alpha>0$, there are a filter $\mathcal{F}$ of rank $\beta$ and a family of filters $(\mathcal{F}_i)_{i\in{\rm dom}\left(\mathcal{F}\right)}$ of ranks $\alpha$ such that $\lim_\mathcal{F}\mathcal{F}_i$ has rank $1$.
\end{thm}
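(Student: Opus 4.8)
The plan is to reduce the whole statement to Lemma \ref{LemRank1}, which already supplies the mechanism by which the rank can collapse to $1$. Fix $\alpha>0$ and $\beta<\omega_1$. By Lemma \ref{LemRank1} choose filters $\mathcal{G}_0$ and $\mathcal{G}_1$ on a common countable set $X$, both of rank $\alpha$, with ${\rm rk}(\mathcal{G}_0\cap\mathcal{G}_1)=1$. The goal is to arrange the base filter $\mathcal{F}$ and the family $(\mathcal{F}_i)$ so that the $\mathcal{F}$-limit is exactly $\mathcal{G}_0\cap\mathcal{G}_1$; then its rank is $1$ and we are done.

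The device for making the limit equal the intersection is a partition of the index set into two $\mathcal{F}$-positive pieces. Suppose $\mathcal{F}$ lives on $I$ and $I=I_0\sqcup I_1$ with $I_0\notin\mathcal{F}$ and $I_1\notin\mathcal{F}$. Put $\mathcal{F}_i=\mathcal{G}_0$ for $i\in I_0$ and $\mathcal{F}_i=\mathcal{G}_1$ for $i\in I_1$, so that each $\mathcal{F}_i$ has rank $\alpha$. I claim $\lim_\mathcal{F}\mathcal{F}_i=\mathcal{G}_0\cap\mathcal{G}_1$. The inclusion $\supseteq$ is immediate, since $A\in\mathcal{G}_0\cap\mathcal{G}_1$ forces $\{i:A\in\mathcal{F}_i\}=I\in\mathcal{F}$. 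For $\subseteq$, take $A\in\lim_\mathcal{F}\mathcal{F}_i$, so $\{i:A\in\mathcal{F}_i\}\in\mathcal{F}$. If $A\notin\mathcal{G}_0$, then $\{i:A\in\mathcal{F}_i\}$ is disjoint from $I_0$, hence contained in $I_1$, and upward closure of $\mathcal{F}$ would give $I_1\in\mathcal{F}$, a contradiction; so $A\in\mathcal{G}_0$, and symmetrically $A\in\mathcal{G}_1$. This step is entirely elementary and uses only that both halves of the partition are $\mathcal{F}$-positive.

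It remains to produce, for each $\beta<\omega_1$, a filter $\mathcal{F}$ of rank exactly $\beta$ carrying such a partition. First I would take any filter $\mathcal{H}$ of rank $\beta$ (for instance $\mathcal{N}_\beta$) on a countable set $I'$, and \emph{double} it: set $I=I'\times\{0,1\}$ and let $\mathcal{F}$ be generated by the sets $(H\times\{0\})\cup(H'\times\{1\})$ with $H,H'\in\mathcal{H}$; equivalently, $A\in\mathcal{F}$ iff both columns $A_j=\{n:(n,j)\in A\}$ belong to $\mathcal{H}$. The partition $I_j=I'\times\{j\}$ then satisfies $I_0,I_1\notin\mathcal{F}$, because one column of each $I_j$ is empty. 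The remaining point is that doubling does not raise the rank (the crude Fubini bound of Theorem \ref{ThmDebsA}(b) would only give $\beta+1$): for the lower bound, the projection $\pi(n,j)=n$ is a quasi-homomorphism from $\mathcal{F}$ to $\mathcal{H}$, since $\pi^{-1}[H]=H\times\{0,1\}\in\mathcal{F}$, so ${\rm rk}(\mathcal{F})\geq\beta$; for the upper bound, if $S\in\Sigma^0_{1+\beta}$ separates $\mathcal{H}$ from $\mathcal{H}^*$ and $p_j(A)=A_j$ are the (continuous) column maps, then $p_0^{-1}[S]\cap p_1^{-1}[S]$ is a $\Sigma^0_{1+\beta}$ set separating $\mathcal{F}$ from $\mathcal{F}^*$ (using $A\in\mathcal{F}^*\Leftrightarrow A_0,A_1\in\mathcal{H}^*$ and $S\cap\mathcal{H}^*=\emptyset$), so ${\rm rk}(\mathcal{F})\leq\beta$.

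Assembling these pieces gives $\mathcal{F}$ of rank $\beta$, filters $\mathcal{F}_i$ of rank $\alpha$, and $\lim_\mathcal{F}\mathcal{F}_i=\mathcal{G}_0\cap\mathcal{G}_1$ of rank $1$, as required. Since the genuinely hard work — forcing a rank-$\alpha$ phenomenon to yield a rank-$1$ intersection — is already packaged in Lemma \ref{LemRank1}, I expect the only real obstacle to be the rank bookkeeping in the doubling step, that is, confirming ${\rm rk}(\mathcal{F})=\beta$ exactly rather than the a priori value $\beta+1$; the pullback-of-a-separator computation above is what resolves it. The case $\beta=0$ is covered uniformly, since doubling a principal (non-free) $\mathcal{H}$ produces a principal, hence rank-$0$, filter still carrying the required partition.
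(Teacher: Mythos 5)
Your proof is correct and follows the same strategy as the paper's: invoke Lemma \ref{LemRank1}, split the index set into two pieces neither of which belongs to $\mathcal{F}$, assign $\mathcal{G}_0$ to one piece and $\mathcal{G}_1$ to the other, and check that the $\mathcal{F}$-limit is exactly $\mathcal{G}_0\cap\mathcal{G}_1$. The only divergence is in how you produce a rank-$\beta$ filter admitting such a partition: you build a ``doubled'' filter on $I'\times\{0,1\}$ and verify, via a quasi-homomorphism for the lower bound and a pullback of a separating set for the upper bound, that doubling preserves the rank. Your rank bookkeeping there is sound, but the whole step is unnecessary: the paper simply takes any non-maximal filter of rank $\beta$ (for instance $\mathcal{N}_\beta$) and notes that, not being an ultrafilter, it admits a set $H$ with $H\notin\mathcal{F}$ and ${\rm dom}(\mathcal{F})\setminus H\notin\mathcal{F}$; this $H$ and its complement form exactly the partition you need, with no new construction and no rank verification. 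So your doubling solves a problem that does not arise. On the other hand, your write-up spells out the verification that $\lim_\mathcal{F}\mathcal{F}_i=\mathcal{G}_0\cap\mathcal{G}_1$ (both inclusions, using upward closure and the $\mathcal{F}$-positivity of both pieces), which the paper asserts without proof, and you handle the case $\beta=0$ explicitly, which the paper covers only by its parenthetical ``non-maximal'' hypothesis.
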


\begin{proof}
Let $\mathcal{G}_0$ and $\mathcal{G}_1$ be filters from Lemma \ref{LemRank1} and $\mathcal{F}$ be any (non-maximal) filter of rank $\beta$ (for instance $\mathcal{F}$ can be the filter $\mathcal{N}_\beta$ defined in \cite{Debs}). Since $\mathcal{F}$ is not an ultrafilter, we can find a set $H$ such that $H\notin\mathcal{F}$ and ${\rm dom}\left(\mathcal{F}\right)\setminus H\notin\mathcal{F}$. Set $\mathcal{F}_i=\mathcal{G}_0$ for $i\in H$ and $\mathcal{F}_i=\mathcal{G}_1$ for $i\notin H$. Then $\lim_\mathcal{F}\mathcal{F}_i$ is equal to $\mathcal{G}_0\cap\mathcal{G}_1$, and so is of rank $1$.
\end{proof}

\section{Some facts concerning filters of countable type}

Following Laflamme (cf. \cite{Laflamme}), we call a filter diagonalizable if there is infinite set $A$ such that $A\subset^* M$ for each set $M$ from the filter. A filter is $\omega$-diagonalizable if there is a countable family of infinite sets $\mathcal{A}=\left\{A_n:n\in\omega\right\}$ such that for each set $M$ from the filter, there is $n\in\omega$ with $A_n\subset M$. 

\begin{prop}
If $\mathcal{F}$ is a filter of countable type, then $\mathcal{F}$ either is $\omega$-diagonalizable or
is of the form $\mathcal{F}_A$ for some finite set $A$.
\end{prop}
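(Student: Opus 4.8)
The plan is to induct on the least $\alpha$ with $\mathcal{F}\in\mathfrak{F}_\alpha$, proving the stronger statement that \emph{every} member of $\mathfrak{F}_\alpha$ is $\omega$-diagonalizable or equals $\mathcal{F}_A$ for some finite $A$. The base case $\alpha=0$ is immediate, since a principal ultrafilter is $\mathcal{F}_{\{x\}}$, i.e. $\mathcal{F}_A$ with $A=\{x\}$ finite. For the inductive step I would write $\mathcal{F}=\lim_{i\rightarrow\mathcal{F}_{Fr}}\mathcal{F}_i$ with each $\mathcal{F}_i\in\bigcup_{\xi<\alpha}\mathfrak{F}_\xi$, so that the inductive hypothesis applies to every $\mathcal{F}_i$: each is either $\omega$-diagonalizable or of the form $\mathcal{F}_{C_i}$ for a finite (and nonempty, as $\mathcal{F}_i$ is proper) set $C_i$. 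The whole argument then rests on the single observation that, since we take the $\mathcal{F}_{Fr}$-limit, $M\in\mathcal{F}$ if and only if $M\in\mathcal{F}_i$ for all but finitely many $i$.

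First I would dispose of the case where $T=\{i:\mathcal{F}_i\text{ is }\omega\text{-diagonalizable}\}$ is infinite. For each $i\in T$ fix the witnessing family $\{A^{(i)}_n:n\in\omega\}$ of infinite sets and take the (still countable) family $\{A^{(i)}_n:i\in T,\ n\in\omega\}$. If $M\in\mathcal{F}$, then $\{i:M\in\mathcal{F}_i\}$ is cofinite, hence meets the infinite set $T$; choosing $i\in T$ with $M\in\mathcal{F}_i$ yields some $A^{(i)}_n\subset M$. Thus $\mathcal{F}$ is $\omega$-diagonalizable and this case is settled.

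It remains to treat the case where $T$ is finite, i.e. there is $N$ with $\mathcal{F}_i=\mathcal{F}_{C_i}$ for all $i\ge N$. Since the finitely many indices below $N$ do not affect the cofiniteness condition, membership in $\mathcal{F}$ is governed entirely by the $C_i$: one has $M\in\mathcal{F}$ iff $C_i\subset M$ for all but finitely many $i\ge N$. Here I would split on whether $\bigcup_{i\ge N}C_i$ is infinite. If it is, the decreasing tails $A_m=\bigcup_{i\ge m}C_i$ are all infinite (removing finitely many finite sets from an infinite union leaves it infinite), and any $M\in\mathcal{F}$ contains $A_{m_0}$ for the threshold $m_0$ beyond which $C_i\subset M$; so $\{A_m:m\ge N\}$ witnesses $\omega$-diagonalizability. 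If instead $\bigcup_{i\ge N}C_i$ is finite, I would set $A=\{x:x\in C_i\text{ for infinitely many }i\}$, a finite set, and show $\mathcal{F}=\mathcal{F}_A$.

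The step I expect to require the most care is this last identification $\mathcal{F}=\mathcal{F}_A$. The key point is that each element of the finite set $\bigcup_{i\ge N}C_i$ lying outside $A$ occurs in only finitely many $C_i$, and there are finitely many such elements, so $C_i\subset A$ for all but finitely many $i$; moreover each $C_i$ is nonempty, which forces $A\neq\emptyset$ and keeps $\mathcal{F}$ proper. This gives both inclusions: $A\subset M$ implies $C_i\subset A\subset M$ cofinitely, hence $M\in\mathcal{F}$, while $M\in\mathcal{F}$ forces every $x\in A$ into $M$, because $x$ lies in infinitely many $C_i$ and cofinitely many $C_i$ are contained in $M$. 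Everything else is bookkeeping; the two subcases are exhaustive and the possible outcomes ($\omega$-diagonalizability or $\mathcal{F}=\mathcal{F}_A$) together cover the inductive step.
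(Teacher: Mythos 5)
Your proof is correct, but it takes a genuinely different route from the paper's. The paper argues from the Mauldin--Preiss--Weizs\"acker characterization of countable type: it takes the class $\mathfrak{F}$ of all filters of countable type satisfying the dichotomy, notes that it contains the principal ultrafilters, and then verifies closure under countable intersections and under increasing countable unions, each in a few lines. You instead run a transfinite induction on ${\rm ct}(\mathcal{F})$ through Fremlin's hierarchy $\mathfrak{F}_\alpha$, analyzing a single $\mathcal{F}_{Fr}$-limit whose entries already satisfy the dichotomy. The two decompositions are closely linked, since $\lim_{i\rightarrow\mathcal{F}_{Fr}}\mathcal{F}_i=\bigcup_{n\in\omega}\bigcap_{i\geq n}\mathcal{F}_i$, so your one inductive step amalgamates the paper's two closure checks, and your cases track theirs: infinitely many $\omega$-diagonalizable entries (paper's easy cases); principal tails with $\bigcup_{i\geq N}C_i$ infinite, where your tail sets $A_m=\bigcup_{i\geq m}C_i$ play the role of the paper's single diagonalizing set $E=\bigcup_n A_n$; and principal tails with finite union, where your pigeonhole set $A$ of points lying in infinitely many $C_i$ replaces the paper's stabilization argument for increasing unions. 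The paper's route buys brevity, since each closure operation is nearly trivial to handle; your route buys the fact that it works entirely within the inductive definition the paper states formally (no appeal to its equivalence with the closure characterization is needed if one takes Fremlin's definition as primary), and it isolates the one piece of combinatorics specific to the Fr\'echet limit: because such a limit imposes only cofinitely many of the constraints $C_i\subset M$, a single diagonalizing set no longer suffices and the countable family of tails is genuinely needed; you identify and handle this point correctly.
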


\begin{proof}
Let $\mathfrak{F}$ be a family of all filters of countable type which either are $\omega$-diagonalizable or of the form $\mathcal{F}_A$ for some finite set $A$. Of course $\mathfrak{F}$ contains all principal ultrafilters since they are of the form $\mathcal{F}_{\left\{k\right\}}$ for some $k\in\omega$. We will show that $\mathfrak{F}$ is closed under countable intersections and increasing countable unions.\\
Let $\mathcal{F}_1,\mathcal{F}_2,\ldots\in\mathfrak{F}$. Assume first that one of them is $\omega$-diagonalizable by some countable family. Then, the filter $\bigcap_{n\in\omega}\mathcal{F}_n$ is also $\omega$-diagonalizable by the same family. Assume now that all of them are of the form $\mathcal{F}_n=\mathcal{F}_{A_n}$ for some finite sets $A_n$. Then $$\bigcap_{n\in\omega}\mathcal{F}_n=\mathcal{F}_{E},$$ where $E=\bigcup_{n\in\omega} A_n$.
If $E$ is finite, then this intersection belongs to $\mathfrak{F}$ by definition. Otherwise the filter  $\bigcap_{n\in\omega}\mathcal{F}_n$ is diagonalized by the set $E$. Hence it is $\omega$-diagonalized by the family $\left\{E\right\}$.\\
Now let $\mathcal{F}_1,\mathcal{F}_2,\ldots\in\mathfrak{F}$ be a family of increasing filters. If each $\mathcal{F}_n$ is $\omega$-diagonalized by $\mathcal{A}_n$, then $\bigcup_{n\in\omega}\mathcal{F}_n$ is $\omega$-diagonalized by $\bigcup_{n\in\omega}\mathcal{A}_n$. Otherwise, there is $n\in\omega$ such that $\mathcal{F}_n=\mathcal{F}_A$, for some finite set $A$. Then all of the filters $\mathcal{F}_k$ for $k\geq n$ are of the form $\mathcal{F}_B$ for $B\subset A$. Since $A$ is finite, this family stabilizes on a filter generated by a certain subset of $A$. Hence $\bigcup_{n\in\omega}\mathcal{F}_n$ is also generated by this subset.
\end{proof}

\begin{prop}
\label{Prop2}
If $\mathcal{F}$ is a P-filter of countable type containing a Fr\'echet filter, then it is diagonalizable.
\end{prop}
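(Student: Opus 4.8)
The plan is to first reduce to the situation where $\mathcal{F}$ is $\omega$-diagonalizable, and then to extract a single diagonalizing set from the witnessing family by playing the P-filter property against the $\omega$-diagonalization.

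First I would observe that, since $\mathcal{F}$ contains the Fr\'echet filter, one has $\bigcap\mathcal{F}\subseteq\bigcap\mathcal{F}_{Fr}=\emptyset$, so $\mathcal{F}$ is free; in particular $\mathcal{F}$ is not of the form $\mathcal{F}_A$ for a finite set $A$. Hence the first alternative of the preceding Proposition must hold, and $\mathcal{F}$ is $\omega$-diagonalizable: there is a family $\{A_n:n\in\omega\}$ of \emph{infinite} sets such that every $M\in\mathcal{F}$ contains some $A_n$. This is where the ``countable type'' and ``contains Fr\'echet'' hypotheses are consumed.

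The heart of the matter is to show that one of the sets $A_n$ already diagonalizes $\mathcal{F}$, and I would argue this by contradiction. Suppose no $A_n$ works. Then for each $n$ there is a witness $M_n\in\mathcal{F}$ with $A_n\setminus M_n$ infinite, i.e.\ $A_n\not\subset^* M_n$. Applying the P-filter property to the countable family $\{M_n:n\in\omega\}\subseteq\mathcal{F}$ yields a single $A\in\mathcal{F}$ with $A\subset^* M_n$ for every $n$. Since $A\in\mathcal{F}$ and $\{A_n\}$ is $\omega$-diagonalizing, there is $n_0$ with $A_{n_0}\subseteq A$. But then $A_{n_0}\subseteq A\subset^* M_{n_0}$ forces $A_{n_0}\setminus M_{n_0}\subseteq A\setminus M_{n_0}$ to be finite, contradicting the choice of $M_{n_0}$. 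Consequently some $A_{n_0}$ satisfies $A_{n_0}\subset^* M$ for all $M\in\mathcal{F}$, and being infinite it witnesses that $\mathcal{F}$ is diagonalizable.

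The only genuine obstacle is precisely this passage from the $\omega$-diagonalizing family to a single set; the idea that makes it work is to use the P-filter property in the contrapositive, converting each hypothetical failure into a filter element $M_n$, pseudo-intersecting these into one $A\in\mathcal{F}$, and then re-diagonalizing $A$ to collapse the whole family back onto one of its own members. Everything else is routine bookkeeping: checking that freeness excludes the finite principal alternative, and that $\subset^*$ behaves transitively under $\subseteq$.
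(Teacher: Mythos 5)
Your proof is correct and follows essentially the same route as the paper's: reduce to $\omega$-diagonalizability via the preceding proposition (which the paper does implicitly), assume no $A_n$ works, convert each failure into a witness $M_n\in\mathcal{F}$, pseudo-intersect via the P-filter property, and derive a contradiction with the diagonalizing family. The only cosmetic difference is that you apply the $\omega$-diagonalization directly to the pseudo-intersection $A$ to get $A_{n_0}\subseteq A\subset^* M_{n_0}$, which is marginally cleaner than the paper's step of first checking that each $A_n\setminus B$ stays infinite.
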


\begin{proof}
Let $\mathcal{A}=\left\{A_n:n\in\omega\right\}$ be a family of infinite sets $\omega$-diagonalizing $\mathcal{F}$ and assume that none of the sets $A_n$ diagonalizes $\mathcal{F}$. So, for each $n\in\omega$ there is $B_n\in\mathcal{F}$ such that $A_n\setminus B_n$ is infinite. Since $\mathcal{F}$ is a P-filter, there is $B\in\mathcal{F}$ such that for all $n\in\omega$ we have $B\subset^* B_n$. But then $A_n\setminus B$ is infinite for each $n\in\omega$. This is a contradiction.
\end{proof}

The next example shows that for a filter $\mathcal{F}$ of countable type, ${\rm ct}(\mathcal{F})$ and ${\rm rk}(\mathcal{F})$ can differ.

\begin{exa}
Let $\mathcal{F}=\left\{\omega\right\}\times\mathcal{F}_{Fr}=\left(\left\{\emptyset\right\}\times Fin\right)^*$ (cf. \cite[Ex. 1.2.3]{Farah}) and denote $\mathcal{N}_1^C=\left\{M:\left|C\setminus M\right|<\omega\right\}$. Notice that if $\left(\mathcal{F}_n\right)_{n\in\omega}$ is a sequence of filters of the form $\mathcal{N}_1^{\left\{i\right\}\times\omega}$ and each of them appears in the sequence infinitely many times, then $\mathcal{F}=\lim_{\mathcal{F}_{Fr}}\mathcal{F}_n$. This in turn causes that $\mathcal{F}\in\mathfrak{F}_2$. Consider filters of the form $\lim_{\mathcal{F}_{Fr}}\mathcal{F}_{\left\{x_n\right\}}\in\mathfrak{F}_1$ for some points $x_n\in\omega$ to see that ${\rm ct}(\mathcal{F})=2$. Each such a filter either is not free (if some point repeats infinitely many times in the sequence $\left(x_n\right)_{n\in\omega}$) or is of the form $\mathcal{N}_1^C$ for $C=\bigcup_{i\in\omega}\left\{x_i\right\}$, although $\mathcal{F}$ is a free filter not of the form $\mathcal{N}_1^C$. Finally, notice that there is an infinite set $A=\left\{0\right\}\times\omega$ diagonalizing $\mathcal{F}$ (so its trace on $A$ is a Fr\'echet filter). Any bijection between $\omega$ and $A$ is therefore a quasi-homomorphism from $\mathcal{N}_1$ to $\mathcal{F}$ ensuring that ${\rm rk}(\mathcal{F})=1$.
\end{exa}

\end{document}